\documentclass{article}

\title{Nested Volume-Surface Integral Equations for Acoustics}

\author{Danilo Aballay\thanks{Institute for Mathematical and Computational Engineering, School of Engineering and Faculty of Mathematics, Pontificia Universidad Católica de Chile, Santiago, Chile.} \and Elwin van 't Wout\footnotemark[1] \thanks{Contact: e.wout@uc.cl}}

\date{July 7, 2026}

\usepackage[utf8]{inputenc}
\usepackage{amsmath, amssymb, amsfonts, amsthm}
\usepackage{latexsym}
\usepackage{tikz}
\usepackage[numbers]{natbib}
\usepackage{doi}
\usepackage{float}
\usepackage{booktabs}
\usepackage{fancyhdr}

\newcommand{\norm}[1]{\left\lVert #1 \right\rVert}
\renewcommand{\d}{\,\mathrm{d}}
\newcommand{\dx}{\d \mathbf{x}}
\newcommand{\dy}{\d \mathbf{y}}
\newcommand{\Rthree}{\mathbb{R}^3}
\newcommand{\normal}{\hat{\mathbf{n}}}
\newcommand{\trace}{\gamma}
\newcommand{\traceD}{\trace_\mathrm{D}}
\newcommand{\traceN}{\trace_\mathrm{N}}
\newcommand{\traceDe}{\traceD^+}
\newcommand{\traceDi}{\traceD^-}
\newcommand{\traceDei}{\traceD^\pm}
\newcommand{\traceNe}{\traceN^+}
\newcommand{\traceNi}{\traceN^-}
\newcommand{\traceNei}{\traceN^\pm}
\newcommand{\ID}{M}
\newcommand{\SL}{V}
\newcommand{\DL}{K}
\newcommand{\AD}{T}

\theoremstyle{plain}
\newtheorem{theorem}{Theorem}
\newtheorem{lemma}[theorem]{Lemma}

\theoremstyle{definition}

\theoremstyle{remark}
\newtheorem{remark}[theorem]{Remark}

\usetikzlibrary{decorations.markings, arrows, arrows.meta}

\tikzset{
    midar/.style 2 args={
        very thick,
        decoration={name=markings,
        mark=at position .55 with {\arrow{latex}},
        mark=at position 0 with {\fill circle (2pt);},
        mark=at position 1 with {\fill circle (2pt);}}
        ,postaction=decorate,
    },
}

\begin{document}

\maketitle

\begin{abstract}
The simulation of high-frequency acoustic wave propagation in unbounded domains with local heterogeneous materials and high-contrast interfaces poses significant challenges to numerical methods. The volume-surface integral equation (VSIE) method is an attractive approach as it automatically satisfies the radiation condition at infinity via Green's functions, handles heterogeneous materials via Newton potentials, and models scattering at high-contrast interfaces via surface integral operators. However, its effectiveness in practical simulations has been limited by high computational costs, sensitivity to sharp interfaces, and insufficient computational verification. This study extends the applicability of VSIE by deriving integral formulations for nested heterogeneous materials with parameter jumps at interfaces. We also develop extensive benchmarks against coupled finite-element and boundary-element methods to verify the VSIE's accuracy and mesh convergence. The various benchmarks using open-source software demonstrate the effectiveness of VSIE for large-scale acoustic simulations.
\end{abstract}

\section{Introduction}

The Helmholtz equation is an ubiquitous model for harmonic wave propagation and the mathematical backbone of many computational acoustics simulations~\cite{morse1986theoretical, hamilton2024nonlinear, garrett2020understanding}. This partial differential equation reads
\begin{equation}
    -\rho \,\nabla \cdot \left(\frac{1}{\rho} \nabla p\right) - \left(\frac{\omega}{c}\right)^2 p = f
    \label{eq_helmholtz_general}
\end{equation}
for the unknown pressure $p(\mathbf{x})$, known source $f(\mathbf{x})$ with angular frequency $\omega$, and a material with mass density $\rho(\mathbf{x})$ and speed of sound $c(\mathbf{x})$, all at the position $\mathbf{x} \in \Rthree$. This study considers a domain consisting of two nested subdomains embedded in an unbounded exterior region. The material parameters $\rho$ and $c$ may be heterogeneous in the interior domains and discontinuous across interfaces. Except for special choices of material parameters, the Helmholtz equation for multiple, heterogeneous materials does not have an analytical solution and efficient numerical methods must be used~\cite{lahaye2017modern}.

Solving the acoustic transmission model with volumetric methods such as the finite element method (FEM), finite volume method, and finite difference method is challenging. Handling scattering in the unbounded domain necessitates artificial boundary conditions~\cite{engquist1977absorbing} or absorbing layers~\cite{berenger1994perfectly} to limit the computational domains and avoid spurious reflections. Also, for a fixed number of grid elements per wavelength, the size of the sparse discretization matrix scales with the third power of the frequency~\cite{logg2012automated}. Worse, these algorithms suffer from the pollution effect and need increasingly finer mesh resolutions or high-order discretizations at high frequencies to maintain accuracy~\cite{babuska1997pollution, appelo2026rule}.

An alternative approach to solving the Helmholtz equation is to reformulate it as an integral equation via representation formulas based on Green's functions. When the material parameters are piecewise constant, the Green's identities can be employed to rewrite the volumetric Helmholtz equation into boundary integral equations at the material interfaces~\cite{nedelec2001acoustic}. The advantage of this boundary integral approach is that the number of degrees of freedom in the surface meshes scales only quadratically, and the radiation condition at infinity is automatically satisfied~\cite{sauter2011boundary}. However, the discretization matrix of the boundary element method (BEM) is dense and must be solved using acceleration techniques such as the fast multipole method~\cite{greengard1987fast} or hierarchical matrix compression~\cite{borm2010efficient}. Importantly, few elements per wavelength suffice to achieve accurate solutions, even at high frequencies~\cite{marburg2002six, galkowski2023does}. However, the BEM is fundamentally limited to piecewise-constant material parameters for which free-space Green's functions are available in each homogeneous subdomain.

In our targeted scenario of heterogeneous materials embedded in unbounded domains, a coupled FEM-BEM approach may be adopted, where the FEM handles the heterogeneous interior and the BEM the radiation condition at infinity~\cite{johnson1980coupling}. While feasible in many situations, scaling coupled FEM-BEM algorithms to large-scale simulations at high frequencies is challenging due to the ill-conditioning of the matrix with mixed sparse and dense blocks and the stringent demands on mesh resolution~\cite{wout2022fembem, wout2024nonconforming}.

This article proposes an integral equation approach for efficiently modeling acoustic wave propagation in heterogeneous materials in unbounded domains. The main idea is to write the Helmholtz equation as a perturbed integral equation in locally heterogeneous domains. While the exterior scattering problem can be handled with a representation formula for the homogeneous Green's function, volume integral operators (also known as Newton potentials) remain in the heterogeneous bounded domains~\cite{steinbach2008numerical}. A well-studied case in the simpler setting of a constant material density is the Lippmann-Schwinger equation~\cite{kirsch2009operator}. However, high contrasts in density across material interfaces are a common feature of many acoustic models. In those cases, the volume integral equations (VIE) in the heterogeneous domains must be coupled with surface integral equations (SIE) at high-contrast interfaces, yielding coupled volume-surface integral equations (VSIE).

The VSIE approach has been applied to various wave propagation models. For example, a variant of the VIE is often used in optics~\cite{hage1991scattering}, where Maxwell's equation with constant permeability and a smooth permittivity can be written as a well-defined integral equation~\cite{beurden2008well, costabel2012essential}, and solved with the discrete dipole algorithm~\cite{draine1994discrete, yurkin2007discrete}. In electromagnetics, the VSIE is especially efficient in modeling scattering at perfect conductors with thin dielectric coatings~\cite{schaubert1984tetrahedral, lu2000coupled}. Furthermore, various acceleration algorithms have been designed for the dense matrix arithmetic~\cite{chew2001fast}, including the fast multipole algorithm~\cite{wang2021fast}, hierarchical matrix compression~\cite{wang2022fast}, and the adaptive integral method~\cite{wei2014more1} that employs fast Fourier transformations~\cite{polimeridis2014stable}. Among the few examples of VIE applications to acoustics, we highlight the accelerated implementation of a biomedical model for a heterogeneous head model~\cite{bleszynski2008fast}. There is also a solid mathematical foundation for the VSIE. For example, the VSIE has a unique solution~\cite{martin2003acoustic}, the Galerkin weak formulation fits within Sobolev spaces~\cite{costabel2015spectrum}, and the integral operators are coercive~\cite{labarca2024volume, labarca2025coupled}.

The adoption of the VSIE in computational acoustics has been much more limited than the alternative FEM, BEM, and coupled FEM-BEM approaches. This study tackles two reasons for the low popularity of the VSIE in computational acoustics. First, we extend the VSIE formulation to a larger set of geometric scenarios, namely nested domains with multiple high-contrast interfaces. Second, we benchmark the VSIE's accuracy against common FEM-BEM algorithms with open-source software. This computational verification is an essential step to guarantee a consistent solution, evaluate comparative characteristics, and design novel formulations. Finally, another reason for the limited use of the VSIE is the computational footprint required to handle dense matrices that scale cubically with the number of grid elements. We leave computational efficiency outside the scope of this manuscript, but mention that all considered formulations support fast arithmetic with hierarchical matrix compression~\cite{almuna2026full}.

We emphasize that the verification and validation of mathematical models and computational algorithms are essential for the credibility of scientific computing~\cite{oberkampf2010verification}. Specifically, numerical methods must correctly solve the underlying partial differential equation with sufficient accuracy~\cite{babuska2004verification}. Here, we will verify the proposed VSIE method by performing computational benchmarks against analytical solutions and highly accurate numerical methods~\cite{oberkampf2002verification}. We choose various geometries and material parameters with increasing complexity for the density. The VSIE's results are then compared with independent implementations of alternative approaches, such as spherical harmonics, BEM, and FEM-BEM, where possible. 

We provide a full derivation of the VSIE for nested domains in Section~\ref{sec:formulation} and present the verification results in Section~\ref{sec:results}.

\section{Formulation}
\label{sec:formulation}

Section~\ref{sec_helmholtz} introduces the equations of motion for harmonic acoustic waves. The nested VSIE model for the Helmholtz equation in unbounded domains with local heterogeneities will be derived in Section~\ref{sec_vsie}. Then, Section~\ref{sec_galerkin} provides the weak formulation and numerical discretization of the VSIE.

\subsection{Helmholtz equation}
\label{sec_helmholtz}

This manuscript studies harmonic acoustic wave propagation in unbounded regions with local heterogeneities in the material parameters for nested domains. To keep the presentation of the formulation concise but sufficiently general, we consider two bounded subdomains, one inside the other, as sketched in Figure~\ref{fig_geometry}. Precisely, $\Omega_0$ is an unbounded domain exterior to the bounded domain~$\Omega_1$ inside which $\Omega_2$ is located. All subdomains are three-dimensional open volumes. Hence, the union $\Omega = \Omega_0 \cup \Omega_1 \cup \Omega_2$ covers the entire space $\mathbb{R}^3$ except for the boundaries. We denote the interface between domains $\Omega_0$ and $\Omega_1$ as $\Gamma_1$, and $\Gamma_2$ is the $\Omega_1$-$\Omega_2$ interface. The surfaces are assumed to be Lipschitz continuous with a unit normal vector~$\normal_j$ pointing outwards of subdomain~$\Omega_j$ for $j=0,1,2$. Notice that this yields two normals at each interface (see Figure~\ref{fig_geometry}).

\begin{figure}[htbp]
    \centering
    \begin{tikzpicture}[scale=0.9, thick]
        \fill[orange!12] (0,0) rectangle (5,3);
        \fill[blue!12]   (1.5,0.8) rectangle (3.5,2.2);
        \draw (0,0) rectangle (5,3);          
        \draw (1.5,0.8) rectangle (3.5,2.2);  
        \node at (2.5,3.3) {$\Omega_0$};
        \node at (2.5,2.6) {$\Omega_1$};
        \node at (2.5,1.5) {$\Omega_2$};
        \node[left] at (0,2.1)   {$\Gamma_1$};
        \node[left] at (1.5,1.0) {$\Gamma_2$};
        \draw[-{Latex}] (3.5,1.0) -- (4.05,1.0) node[below] {$\mathbf{\hat{n}_2}$}; 
        \draw[-{Latex}] (3.5,1.4) -- (2.95,1.4) node[above] {$\mathbf{\hat{n}_1}$}; 
        \draw[-{Latex}] (5,1.6) -- (4.40,1.6) node[below] {$\mathbf{\hat{n}_0}$};   
        \draw[-{Latex}] (5,2.0) -- (5.60,2.0) node[above] {$\mathbf{\hat{n}_1}$};   
    \end{tikzpicture}
    \caption{A sketch of the geometry. The domain $\Omega_0$ is unbounded, and $\Omega_2$ is inside $\Omega_1$. The normals point outward from the respective subdomains.}
    \label{fig_geometry}
\end{figure}

As is common practice in computational engineering, each subdomain represents a different material. Hence,
\begin{align}
    \label{eq_density}
    \rho(\mathbf{x}) = \begin{cases} \rho_0 & \text{in } \Omega_0, \\ \rho_1(\mathbf{x}) & \text{in } \Omega_1, \\ \rho_2(\mathbf{x}) & \text{in } \Omega_2; \end{cases} \\
    \label{eq_speed_of_sound}
    c(\mathbf{x}) = \begin{cases} c_0 & \text{in } \Omega_0, \\ c_1(\mathbf{x}) & \text{in } \Omega_1, \\ c_2(\mathbf{x}) & \text{in } \Omega_2; \end{cases}
\end{align}
denote the mass density and speed of sound of the materials, respectively. Here, we assume the unbounded exterior domain to be `homogeneous', interpreted as having constant material parameters, and the bounded domains are `heterogeneous', meaning location-dependent functions for the density and speed of sound. Furthermore, we assume $c(\mathbf{x}) \in \mathcal{C}(\Omega)$ and $\rho(\mathbf{x}) \in \mathcal{C}^1(\Omega)$, continuous and continuously differentiable inside each subdomain, respectively. Notice that this allows for discontinuities at the interfaces $\Gamma_1$ and $\Gamma_2$, as $\Omega$ excludes these surfaces. Finally, the material parameters are bounded and positive, that is, $0 < \rho_\mathrm{min} \leq \rho(\mathbf{x}) \leq \rho_\mathrm{max}$ and $0 < c_\mathrm{min} \leq c(\mathbf{x}) \leq c_\mathrm{max}$ for all $\mathbf{x} \in \Omega$.

An $e^{-\imath \omega t}$ time component will be assumed for the harmonic wave propagation. Furthermore, the acoustic pressure and normal particle velocity are continuous across material interfaces. At infinity, the Sommerfeld radiation condition forces outgoing fields. Finally, the general Helmholtz equation~\eqref{eq_helmholtz_general} can be written for our setting as the boundary value problem
\begin{align}
    &-\nabla^2 \, p(\mathbf{x}) - k_0^2 p(\mathbf{x}) = f(\mathbf{x}), && \mathbf{x} \in \Omega_0;
    \label{eq_helmholtz_exterior} \\
    &-\rho_j(\mathbf{x}) \,\nabla \cdot \left(\frac{1}{\rho_j(\mathbf{x})} \nabla p(\mathbf{x})\right) - \left(\frac{\omega}{c_j(\mathbf{x})}\right)^2 p(\mathbf{x}) = f(\mathbf{x}), && \mathbf{x} \in \Omega_j \text{ for } j=1,2;
    \label{eq_helmholtz_subdomains} \\
    &\traceDe p(\mathbf{x}) = \traceDi p(\mathbf{x}), && \mathbf{x} \in \Gamma_j \text{ for } j=1,2;
    \label{eq_transmission_pressure} \\
    &\frac1{\traceDe\rho(\mathbf{x})} \, \traceNe p(\mathbf{x}) = \frac1{\traceDi\rho(\mathbf{x})} \, \traceNi p(\mathbf{x}), && \mathbf{x} \in \Gamma_j \text{ for } j=1,2;
    \label{eq_transmission_velocity} \\
    &\lim_{\lvert\mathbf{x}\rvert \to \infty} \lvert\mathbf{x}\rvert \left(\nabla p(\mathbf{x}) \cdot \lvert\mathbf{x}\rvert - \imath k_0 p(\mathbf{x})\right) = 0.
    \label{eq_sommerfeld}
\end{align}
Here, $\imath$ denotes the imaginary unit, $k_0 = \omega/c_0$ the exterior wavenumber, and the Dirichlet and Neumann traces are given by
\begin{align}
    \traceDei v(\mathbf{x}) &= \lim_{\mathbf{y} \to \mathbf{x}} v(\mathbf{y}) & \text{for } \mathbf{x} \in \Gamma \text{ and } \mathbf{y} \in \Omega^\pm, \\
    \traceNei v(\mathbf{x}) &= \lim_{\mathbf{y} \to \mathbf{x}} \nabla v(\mathbf{y}) \cdot \normal(\mathbf{x}) & \text{for } \mathbf{x} \in \Gamma \text{ and } \mathbf{y} \in \Omega^\pm,
\end{align}
where $\Omega^+$ and $\Omega^-$ are the domains exterior and interior to the interface $\Gamma$, respectively. The known $f(x)$ models the acoustic source. Since most physics and engineering applications consider sources of finite energy and bounded size, $f(x)$ will be assumed to be integrable and compactly supported.

\subsection{Volume-surface integral formulation}
\label{sec_vsie}

This section derives the continuous integral formulation for the nested Helmholtz problem, which will be solved subsequently with numerical methods. We remark that, although the formulation is novel, the operators are well-studied for similar formulations. See, e.g., \cite{colton1998inverse, steinbach2008numerical, sauter2011boundary, costabel2012essential, labarca2024volume}, for more information about their functional analysis.

\subsubsection{Exterior Helmholtz equation}

Remember that the unbounded exterior domain is homogeneous and that heterogeneities are localized to the bounded subdomains. These geometrical characteristics are essential to the VSIE design, as it solves a perturbation of the exterior Helmholtz equation. For this purpose, let us define the derived material parameters $\alpha$ and $\beta$ as
\begin{align}
    \alpha(\mathbf{x}) &= \frac{\rho_0}{\rho(\mathbf{x})} - 1,
    \label{eq_alpha} \\
    \beta(\mathbf{x}) &= \frac{\rho_0}{\rho(\mathbf{x})} \left(\frac{\omega}{c(\mathbf{x})}\right)^2 - \left(\frac{\omega}{c_0}\right)^2,
    \label{eq_beta}
\end{align}
for $\mathbf{x} \in \Omega$. A direct consequence of these definitions is that $\alpha(\mathbf{x}) = 0$ and $\beta(\mathbf{x}) = 0$ in the unbounded exterior domain~$\Omega_0$. Hence, these functions have compact support in $\Omega_1 \cup \Omega_2$, the interior domains. Also, they inherit the regularity and boundedness properties from the physical parameters.

Then, multiplying the Helmholtz equations~\eqref{eq_helmholtz_exterior} and~\eqref{eq_helmholtz_subdomains} by $\rho_0/\rho(\mathbf{x})$ yields
\begin{align*}
    -\nabla \cdot \left(\frac{\rho_0}{\rho(\mathbf{x})} \nabla p(\mathbf{x})\right) - \frac{\rho_0}{\rho(\mathbf{x})} \left(\frac{\omega}{c(\mathbf{x})}\right)^2 p(\mathbf{x}) = \frac{\rho_0}{\rho(\mathbf{x})} f(\mathbf{x}), \quad \mathbf{x} \in \Omega,
\end{align*}
and substituting $\alpha$ and $\beta$ gives
\begin{align*}
    -\nabla \cdot \left(\left(\alpha(\mathbf{x}) + 1\right) \nabla p(\mathbf{x})\right) - \left(\beta(\mathbf{x}) + \left(\frac{\omega}{c_0}\right)^2\right) p(\mathbf{x}) = \frac{\rho_0}{\rho(\mathbf{x})} f(\mathbf{x}), \quad \mathbf{x} \in \Omega.
\end{align*}
Moving all heterogeneous parameters to the right yields
\begin{align}
    -\nabla^2 p(\mathbf{x}) - k_0^2 p(\mathbf{x}) = \frac{\rho_0}{\rho(\mathbf{x})} f(\mathbf{x}) + \nabla \cdot \left(\alpha(\mathbf{x}) \nabla p(\mathbf{x})\right) + \beta(\mathbf{x}) p(\mathbf{x}), \quad \mathbf{x} \in \Omega,
    \label{eq_perturbed_helmholtz}
\end{align}
the \emph{perturbed} Helmholtz equation.

The advantage of writing the Helmholtz equation in this perturbed form is that the left-hand side is a homogeneous Helmholtz equation, and the pressure terms on the right-hand side are localized to the interior domain. Hence, Eq.~\eqref{eq_perturbed_helmholtz} can be interpreted as a local perturbation to a free-field problem.

\subsubsection{Green's function}

Since the left-hand side of Eq.~\eqref{eq_perturbed_helmholtz} is a homogeneous Helmholtz equation, let us design integral equations with the exterior Green's functions. Precisely, for a constant $\kappa$ and $\delta$ denoting the Dirac delta distribution, a free-space homogeneous Helmholtz equation
\begin{equation}
    -\left(\nabla_\mathbf{x}^2 + \kappa^2\right) G_\kappa(\mathbf{x}, \mathbf{y}) = \delta(\mathbf{x} - \mathbf{y}), \quad \mathbf{x}, \mathbf{y} \in \Rthree,
    \label{eq_helmholtz_dirac}
\end{equation}
has a fundamental solution~\cite{duffy2015green}, given by the well-known Green's function
\begin{equation}
    G_\kappa(\mathbf{x}, \mathbf{y}) = \frac{e^{\imath \kappa \lvert\mathbf{x} - \mathbf{y}\rvert}}{4\pi \lvert\mathbf{x} - \mathbf{y}\rvert}, \quad \mathbf{x} \ne \mathbf{y}.
    \label{eq_greens_function}
\end{equation}
Here, we use the notation $\nabla_\mathbf{x}$ to emphasize that the derivatives are taken with respect to the variable~$\mathbf{x}$. A key theorem in the design of integral equations is that solutions of the Helmholtz equation can be expressed as a convolution between the Green's function and the source term (e.g.,~\cite[Lemma~1]{costabel2015spectrum}). That is, if $v(\mathbf{x})$ is an integrable function with compact support, then the solution $u(\mathbf{x})$ of the homogeneous Helmholtz equation,
\begin{equation}
    -\left(\nabla^2 + \kappa^2\right) u(\mathbf{x}) = v(\mathbf{x}),
\end{equation}
is given by the distribution
\begin{equation}
    u(\mathbf{x}) = \left(G_\kappa \star v\right)(\mathbf{x}) = \int_{\Rthree} \frac{e^{\imath \kappa \lvert\mathbf{x} - \mathbf{y}\rvert}}{4\pi \lvert\mathbf{x} - \mathbf{y}\rvert} v(\mathbf{y}) \dy, \quad \mathbf{x} \in \Rthree
    \label{eq_theorem_convolution}
\end{equation}
where $\star$ denotes the convolution operator. This convolution is also known as the Newton potential (e.g.,~\cite[Section~3.1.1]{sauter2011boundary}).

\subsubsection{Field representation}

The fundamental step in deriving the acoustic VSIE is to represent the pressure field as a convolution with the Green's function. This step can be achieved by applying the above lemma to the Helmholtz equation, yielding a VIE for the unknown pressure field~$p$, given a known incident pressure field $p_\mathrm{inc}$.

\begin{theorem}
\label{theorem_convolution}
The solution $p(\mathbf{x})$ of the perturbed Helmholtz equation~\eqref{eq_perturbed_helmholtz} is implicitly given by
\begin{align}
    p(\mathbf{x}) &- \int_{\Omega_1 \cup \Omega_2} G_{k_0}(\mathbf{x},\mathbf{y}) \beta(\mathbf{y}) p(\mathbf{y}) \dy \nonumber \\
    &- \int_{\Omega_1 \cup \Omega_2} G_{k_0}(\mathbf{x},\mathbf{y}) \, \nabla_{\mathbf{y}} \cdot \left(\alpha(\mathbf{y}) \nabla_{\mathbf{y}} p(\mathbf{y})\right) \dy \nonumber \\
    &= p_\mathrm{inc}(\mathbf{x}), 
    &\mathbf{x} \in \Omega,
    \label{eq_vie_general}
\end{align}
the general VIE, where the known incident wave field is given by
\begin{equation}
    p_\mathrm{inc}(\mathbf{x}) = \int_{\Rthree} G_{k_0}(\mathbf{x},\mathbf{y}) \frac{\rho_0}{\rho(\mathbf{y})} f(\mathbf{y}) \dy, \quad \mathbf{x} \in \Rthree,
    \label{eq_pinc}
\end{equation}
for the source $f(\mathbf{x})$.
\end{theorem}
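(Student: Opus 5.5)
The plan is to apply the convolution result \eqref{eq_theorem_convolution} directly to the perturbed Helmholtz equation~\eqref{eq_perturbed_helmholtz}, with $\kappa = k_0$. The right-hand side of that equation becomes the source term $v$, and linearity of the convolution then splits the representation into three pieces.

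First, I would set
\[
v(\mathbf{x}) = \frac{\rho_0}{\rho(\mathbf{x})} f(\mathbf{x}) + \nabla \cdot \left(\alpha(\mathbf{x}) \nabla p(\mathbf{x})\right) + \beta(\mathbf{x}) p(\mathbf{x}),
\]
so that Eq.~\eqref{eq_perturbed_helmholtz} reads $-(\nabla^2 + k_0^2) p = v$. To invoke \eqref{eq_theorem_convolution}, $v$ must be integrable with compact support. The first term qualifies because $f$ is integrable and compactly supported and $\rho_0/\rho$ is bounded by $\rho_0/\rho_\mathrm{min}$. The remaining two terms qualify because $\alpha$ and $\beta$ vanish identically in $\Omega_0$, so their support lies in the bounded set $\overline{\Omega_1 \cup \Omega_2}$. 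Second, I would write $p = G_{k_0} \star v$ and split the integral by linearity. The $f$-term gives exactly $p_\mathrm{inc}$ as defined in \eqref{eq_pinc}. In the $\beta$- and $\alpha$-terms, the integration domain shrinks from $\Rthree$ to $\Omega_1 \cup \Omega_2$ by the support property. Third, moving these two volume integrals to the left-hand side yields \eqref{eq_vie_general}, which holds for $\mathbf{x} \in \Omega$.

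The main obstacle is the term $\nabla \cdot (\alpha \nabla p)$. Since $\alpha$ jumps across $\Gamma_1$ and $\Gamma_2$, its divergence taken over all of $\Rthree$ is only a distribution, with single-layer contributions on the interfaces. The integral in \eqref{eq_vie_general}, however, is over the open set $\Omega_1 \cup \Omega_2$, which excludes these surfaces. I would therefore read the equation in the sense the statement intends: the divergence is taken pointwise inside each subdomain, where $\rho \in \mathcal{C}^1$ makes $\alpha$ differentiable. The theorem then has to be understood with $\Omega$ excluding the interfaces.

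Concretely, I would justify it as follows. Away from the interfaces, $p$ satisfies \eqref{eq_helmholtz_exterior} and \eqref{eq_helmholtz_subdomains}. The interface jump terms, which the later surface integral equations must account for, cancel by the transmission condition \eqref{eq_transmission_velocity}. To see this, note that the normal flux $(\alpha+1)\traceN p = (\rho_0/\rho)\traceN p$ is continuous across each $\Gamma_j$. Hence $\nabla \cdot ((\alpha+1)\nabla p)$ carries no surface delta. The distributional Laplacian $\nabla^2 p$ does carry such a term, but it is exactly matched by the surface delta in the distributional $\nabla \cdot(\alpha \nabla p)$. Taking all quantities in the piecewise sense on both sides of \eqref{eq_perturbed_helmholtz} is therefore consistent, and the convolution identity applies.

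Finally, I would check that the Sommerfeld condition \eqref{eq_sommerfeld} selects the outgoing representation. This holds because $G_{k_0}$ is the outgoing fundamental solution, so the convolution is the unique radiating solution and no homogeneous solution needs to be added.
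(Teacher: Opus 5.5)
Your main line is exactly the paper's proof: define $v$, check compact support via $\alpha=\beta=0$ in $\Omega_0$, apply \eqref{eq_theorem_convolution} with $\kappa=k_0$, and split by linearity. The paper simply asserts that $v$ is integrable and never discusses the interfaces. The problem is your interface paragraph: the one step you add beyond the paper fails.

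The identity $p=G_{k_0}\star v$ inverts $-(\nabla^2+k_0^2)$ as a distribution on all of $\Rthree$. It cannot be applied to an equation that holds only pointwise in the open set $\Omega$. Suppose $\nabla\cdot(\alpha\nabla p)$ is read piecewise. The distributional Laplacian of $p$ still contains $[\partial_{\normal}p]\,\delta_{\Gamma_j}$, and by \eqref{eq_transmission_velocity} this equals $-[\alpha\,\partial_{\normal}p]\,\delta_{\Gamma_j}$. Here $[u]$ is the limit from the side $\normal_j$ points into minus the limit from the side it points out of. This term is generically nonzero when the density jumps. Convolving therefore gives
\begin{equation*}
p(\mathbf{x}) - \int_{\Omega_1\cup\Omega_2} G_{k_0}(\mathbf{x},\mathbf{y})\bigl(\beta p + \nabla_{\mathbf{y}}\cdot(\alpha\nabla_{\mathbf{y}}p)\bigr)(\mathbf{y})\dy - \sum_{j=1}^{2}\int_{\Gamma_j} G_{k_0}(\mathbf{x},\mathbf{y})\,[\alpha\,\partial_{\normal_j}p](\mathbf{y})\dy = p_\mathrm{inc}(\mathbf{x}).
\end{equation*}
So under your pointwise reading, \eqref{eq_vie_general} is false in general: it misses a single-layer potential of the flux jump. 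The fact that the two surface deltas match shows that the equation holds distributionally when both are kept. It does not allow you to drop both and still invoke the convolution theorem.

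Your cancellation observation is the right ingredient, used the other way round. Continuity of $(\rho_0/\rho)\traceN p$ means that $-\nabla\cdot\bigl((1+\alpha)\nabla p\bigr)-(k_0^2+\beta)p=(\rho_0/\rho)f$ holds in the sense of distributions on $\Rthree$. Hence $-(\nabla^2+k_0^2)p=v$, with $\nabla\cdot(\alpha\nabla p)$ taken distributionally and the surface terms $[\alpha\,\partial_{\normal}p]\,\delta_{\Gamma_j}$ included. Then $v$ is a compactly supported distribution rather than an $L^1$ function. The representation $p=G_{k_0}\star v$ still holds by uniqueness of radiating solutions, as in your last paragraph. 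Equation \eqref{eq_vie_general} is then correct once its second integral is understood in this distributional sense.

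This is also the reading the paper's next lemma relies on, since $G_{k_0}\star\nabla\cdot(\alpha\nabla p)=\nabla\cdot\bigl(G_{k_0}\star(\alpha\nabla p)\bigr)$ is a distributional identity. With a piecewise divergence, that swap would fail by exactly the surface terms displayed above.
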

\begin{proof}
Let us define
\begin{equation}
    v(\mathbf{x}) = \frac{\rho_0}{\rho(\mathbf{x})} f(\mathbf{x}) + \beta(\mathbf{x}) p(\mathbf{x}) + \nabla \cdot \left(\alpha(\mathbf{x}) \nabla p(\mathbf{x})\right), \quad \mathbf{x} \in \Omega,
\end{equation}
so that the perturbed Helmholtz equation~\eqref{eq_perturbed_helmholtz} simplifies to
\begin{align*}
    -\nabla^2 p(\mathbf{x}) - k_0^2 p(\mathbf{x}) = v(\mathbf{x}), \quad \mathbf{x} \in \Omega.
\end{align*}
To apply the convolution theorem, $v(\mathbf{x})$ must be integrable with compact support. First, $f(\mathbf{x})$ is integrable and has a compact support since it represents the acoustic source. Second, the unknown $p(\mathbf{x})$, and its second-order derivatives, must be integrable because it represents the acoustic pressure field. Third, the derived parameters $\alpha(\mathbf{x})$, $\nabla\alpha(\mathbf{x})$, and $\beta(\mathbf{x})$ are bounded, continuous, and of compact support in the interior domain, by construction. By linearity, $v(\mathbf{x})$ is integrable with compact support.

Now, we can apply the convolution theorem~\eqref{eq_theorem_convolution} to the perturbed Helmholtz equation~\eqref{eq_perturbed_helmholtz} to write the pressure field as
\begin{align*}
    p = G_{k_0} \star \left(\frac{\rho_0}{\rho} f + \beta p + \nabla \cdot \left(\alpha \nabla p\right)\right) \text{ in } \Omega.
\end{align*}
By linearity of the convolution operator,
\begin{align*}
    p - G_{k_0} \star \left(\beta p\right) - G_{k_0} \star \left(\nabla \cdot \left(\alpha \nabla p\right)\right) = G_{k_0} \star \left(\frac{\rho_0}{\rho} f\right) \text{ in } \Omega.
\end{align*}
Writing out the convolution and substituting the definition~\eqref{eq_pinc} yields Eq.~\eqref{eq_vie_general}.
\end{proof}

\subsubsection{Reducing the differentiation order}

While the VIE~\eqref{eq_vie_general} could be discretized directly, this yields numerical challenges in handling the divergence term $\nabla_{\mathbf{y}} \cdot \left(\alpha(\mathbf{y}) \nabla_{\mathbf{y}} p(\mathbf{y})\right)$ in the second convolution. First, while $\alpha(\mathbf{x}) \in \mathcal{C}^1(\Omega)$ allows the divergence to be computed, this may yield sharp gradients in heterogeneous materials on a coarse mesh. Second, the second-order derivatives of the unknown pressure $p(\mathbf{x})$ must be well represented on the mesh, which necessitates the use of high-order numerical schemes.

While the natural functional setting for the pressure of the underlying continuous PDE \eqref{eq_helmholtz_exterior} and~\eqref{eq_helmholtz_subdomains} is the Sobolev space $H^1(\Omega)$, the integral operator itself extends to the $L^2(\Omega)$ function space~\cite{costabel2015spectrum}. Nevertheless, discretizing this formulation with low-order basis functions leads to ill-conditioning and poor numerical accuracy at high density contrasts~\cite{bleszynski2008contrast, kirsch2009operator}. Hence, we will perform a series of algebraic manipulations to reduce the differentiation order and achieve a VSIE formulation suitable for numerical evaluation in $L^2(\Omega)$.

Let us start manipulating the VIE by swapping the order of the convolution and divergence.

\begin{lemma}
The VIE~\eqref{eq_vie_general} is equivalent to
\begin{align}
    p(\mathbf{x}) &- \int_{\Omega_1 \cup \Omega_2} G_{k_0}(\mathbf{x},\mathbf{y}) \beta(\mathbf{y}) p(\mathbf{y}) \dy \nonumber \\
    &- \nabla_{\mathbf{x}} \cdot \int_{\Omega_1 \cup \Omega_2} G_{k_0}(\mathbf{x},\mathbf{y}) \alpha(\mathbf{y}) \nabla_{\mathbf{y}} p(\mathbf{y}) \dy \nonumber \\
    &= p_\mathrm{inc}(\mathbf{x}),
    &&\mathbf{x} \in \Omega.
    \label{eq_vie_convolution}
\end{align}
\end{lemma}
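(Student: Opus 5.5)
The two formulations differ only in the third term, so it suffices to show, for $\mathbf{x}\in\Omega$,
\begin{equation*}
    \int_{\Omega_1 \cup \Omega_2} G_{k_0}(\mathbf{x},\mathbf{y}) \, \nabla_{\mathbf{y}} \cdot \left(\alpha(\mathbf{y}) \nabla_{\mathbf{y}} p(\mathbf{y})\right) \dy
    = \nabla_{\mathbf{x}} \cdot \int_{\Omega_1 \cup \Omega_2} G_{k_0}(\mathbf{x},\mathbf{y}) \alpha(\mathbf{y}) \nabla_{\mathbf{y}} p(\mathbf{y}) \dy ,
\end{equation*}
interpreted, as in Theorem~\ref{theorem_convolution}, in the sense of convolutions of compactly supported distributions on $\Rthree$. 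Set $\mathbf{w} = \alpha \nabla p$. This is a vector field with compact support in $\overline{\Omega_1 \cup \Omega_2}$, since $\alpha$ vanishes in $\Omega_0$, and its components are integrable under the same regularity assumptions on $p$ used in the proof of Theorem~\ref{theorem_convolution}. The claim is then the identity $G_{k_0} \star (\nabla \cdot \mathbf{w}) = \nabla \cdot (G_{k_0} \star \mathbf{w})$.

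The key step is the standard rule that derivatives commute with convolution. For a distribution $G$ and a compactly supported distribution $w$, one has $\partial_i (G \star w) = G \star \partial_i w = (\partial_i G)\star w$. Applying this componentwise and summing over $i$ gives the identity above. For readers preferring a classical argument, I would make it concrete as follows. Use the translation invariance $G_{k_0}(\mathbf{x},\mathbf{y}) = G_{k_0}(\mathbf{x}-\mathbf{y})$, so that $\nabla_{\mathbf{x}} G_{k_0} = -\nabla_{\mathbf{y}} G_{k_0}$. Since $\nabla G_{k_0}$ is $\mathcal{O}(|\mathbf{x}-\mathbf{y}|^{-2})$ and hence locally integrable, we may differentiate under the integral:
\begin{equation*}
    \nabla_{\mathbf{x}} \cdot \int G_{k_0}\,\mathbf{w} \dy = -\int \nabla_{\mathbf{y}} G_{k_0} \cdot \mathbf{w} \dy .
\end{equation*}
Integrating by parts on $\Rthree$ then returns $\int G_{k_0}\, \nabla_{\mathbf{y}}\cdot \mathbf{w} \dy$, with no boundary term at infinity because $\mathbf{w}$ has compact support.

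The main obstacle is the integration by parts itself. It involves the singularity at $\mathbf{y}=\mathbf{x}$, and, more seriously, the fact that $\mathbf{w}$ may jump across $\Gamma_1$ and $\Gamma_2$, because $\alpha$ and the Neumann trace of $p$ are discontinuous there. I would handle the singularity by excising a ball $B_\varepsilon(\mathbf{x})$. The resulting surface term is bounded by $\varepsilon^2 \cdot \varepsilon^{-1}\sup|\mathbf{w}| \to 0$ when $\mathbf{w}$ is locally bounded near $\mathbf{x}$, and this extends to integrable $\mathbf{w}$ by a density argument.

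For the interfaces, I would use the global distributional convention. Here $\nabla\cdot(\alpha\nabla p)$ in \eqref{eq_perturbed_helmholtz} is understood on all of $\Rthree$, so it contains the single-layer measures coming from the jump of $\alpha\nabla p\cdot\normal$ across $\Gamma_j$. With that convention the identity holds exactly, and the integral over $\Omega_1\cup\Omega_2$ in \eqref{eq_vie_general} is read accordingly. I would state this reading explicitly, since the later surface terms of the VSIE arise from precisely these jumps. With it, the equivalence of the two VIEs follows.
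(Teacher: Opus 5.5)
Your proposal is correct and uses the same key fact as the paper's one-line proof: differentiation commutes with convolution, $\partial_i(G_{k_0}\star w) = G_{k_0}\star\partial_i w$, applied componentwise to $\mathbf{w}=\alpha\nabla p$. Your remark that $\nabla\cdot(\alpha\nabla p)$ must be read as a distribution on $\Rthree$, including the single-layer terms from the jumps of $\alpha\,\partial_{\normal} p$ across $\Gamma_1$ and $\Gamma_2$, is a worthwhile addition that the paper leaves implicit; read literally as a piecewise pointwise divergence integrated over $\Omega_1\cup\Omega_2$, the two sides would differ by exactly those surface integrals.
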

\begin{proof}
This is a direct consequence of the property $\partial(u \star v) = u \star \partial v$ for the convolution operator $\star$, where $\partial$ denotes differentiation~\cite{friedlander1998introduction}.
\end{proof}

Now, let us apply the well-known integration-by-parts formula to the gradient term. This will yield a VSIE with surface integrals at the material interfaces.

\begin{lemma}
The VIE~\eqref{eq_vie_convolution} is equivalent to the VSIE
\begin{align}
    p(\mathbf{x}) &- \int_{\Omega_1 \cup \Omega_2} G_{k_0}(\mathbf{x},\mathbf{y}) \beta(\mathbf{y}) p(\mathbf{y}) \dy \nonumber \\
    &+ \nabla_{\mathbf{x}} \cdot \int_{\Omega_1 \cup \Omega_2} \nabla_{\mathbf{y}} \left(G_{k_0}(\mathbf{x},\mathbf{y}) \alpha(\mathbf{y})\right) p(\mathbf{y}) \dy \nonumber \\
    &- \nabla_{\mathbf{x}} \cdot \int_{\Gamma_1} G_{k_0}(\mathbf{x},\mathbf{y}) \alpha_1(\mathbf{y}) p(\mathbf{y}) \normal_1(\mathbf{y}) \dy \nonumber \\
    &- \nabla_{\mathbf{x}} \cdot \int_{\Gamma_2} G_{k_0}(\mathbf{x},\mathbf{y}) \left(\alpha_2(\mathbf{y}) - \alpha_1(\mathbf{y})\right) p(\mathbf{y}) \normal_2(\mathbf{y}) \dy \nonumber \\
    &= p_\mathrm{inc}(\mathbf{x}),
    &&\mathbf{x} \in \Omega.
    \label{eq_vsie_divergence}
\end{align}
\end{lemma}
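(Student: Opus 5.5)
The plan is to apply the divergence theorem separately in each subdomain $\Omega_1$ and $\Omega_2$ to the vector-valued volume integral in~\eqref{eq_vie_convolution}. This is necessary because $\alpha$ is only $\mathcal{C}^1$ inside each subdomain and may jump across $\Gamma_1$ and $\Gamma_2$. Fix $\mathbf{x} \in \Omega$ and split
\[
\int_{\Omega_1 \cup \Omega_2} G_{k_0}(\mathbf{x},\mathbf{y}) \alpha(\mathbf{y}) \nabla_{\mathbf{y}} p(\mathbf{y}) \dy = \sum_{j=1}^{2} \int_{\Omega_j} G_{k_0}(\mathbf{x},\mathbf{y}) \alpha_j(\mathbf{y}) \nabla_{\mathbf{y}} p(\mathbf{y}) \dy .
\]
In each term, use the product rule
\[
G \alpha \nabla p = \nabla(G \alpha p) - p \nabla(G\alpha)
\]
and the componentwise Gauss theorem $\int_{\Omega_j} \nabla u \dy = \int_{\partial\Omega_j} u \, \normal_j \dy$. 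Each subdomain integral then becomes $-\int_{\Omega_j} \nabla_{\mathbf{y}}(G\alpha_j)\, p \dy$ plus a boundary term. Summing the two volume parts gives exactly the second line of~\eqref{eq_vsie_divergence} after the outer $-\nabla_{\mathbf{x}}\cdot$ flips the sign.

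The boundary bookkeeping is as follows. The boundary of $\Omega_2$ is $\Gamma_2$ with normal $\normal_2$, which gives $\int_{\Gamma_2} G \alpha_2 p \, \normal_2$. The boundary of $\Omega_1$ is $\Gamma_1 \cup \Gamma_2$. On $\Gamma_1$ its outward normal is $\normal_1$. On $\Gamma_2$ its outward normal is $\normal_1 = -\normal_2$, which gives $-\int_{\Gamma_2} G \alpha_1 p\, \normal_2$, with $\alpha_1$ understood as the trace from $\Omega_1$. The two $\Gamma_2$ contributions combine into the factor $(\alpha_2-\alpha_1)$. Here $p$ on the interfaces is unambiguous by the pressure transmission condition~\eqref{eq_transmission_pressure}. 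Applying $-\nabla_{\mathbf{x}}\cdot$ to these boundary terms produces the third and fourth lines. Every step is an identity, so the reverse direction holds trivially, which gives the claimed equivalence.

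The main obstacle is justifying Gauss's theorem, because $G_{k_0}(\mathbf{x},\cdot)$ is singular at $\mathbf{y}=\mathbf{x}$ whenever $\mathbf{x} \in \Omega_1\cup\Omega_2$. I would first excise a small ball $B_\varepsilon(\mathbf{x})$ and apply the theorem on $\Omega_j \setminus B_\varepsilon$. The extra sphere term $\int_{\partial B_\varepsilon} G\alpha p\,\normal \dy$ is $O(\varepsilon)$, since $G = O(1/\varepsilon)$ and the area is $O(\varepsilon^2)$. The volume integrand $p\nabla_{\mathbf{y}}(G\alpha)$ is $O(|\mathbf{x}-\mathbf{y}|^{-2})$, which is locally integrable in $\Rthree$. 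So letting $\varepsilon\to0$ recovers the formula. The remaining technical points are the regularity assumptions needed to justify these limits: $\alpha_j\in\mathcal{C}^1$ up to the boundary with bounded traces, Lipschitz interfaces, and $p$ regular enough (e.g. $H^1$ with traces) to justify the limit. Alternatively, the whole identity can be read distributionally, as in the previous lemma.
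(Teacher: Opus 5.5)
Your proposal is correct and follows the paper's route: integration by parts on each subdomain yields a $\Gamma_1$ term with $\normal_1$ and two $\Gamma_2$ terms, which merge into $(\alpha_2-\alpha_1)\normal_2$ via $\normal_1=-\normal_2$ and the pressure transmission condition, and substitution into the VIE with $-\nabla_{\mathbf{x}}\cdot$ gives the stated VSIE. Your excision of a small ball around the singularity of $G_{k_0}(\mathbf{x},\cdot)$ is extra rigor the paper omits. Your $O(\varepsilon)$ bound on the sphere term as written needs $p$ locally bounded near $\mathbf{x}$, or a Lebesgue-point argument for a.e.\ $\mathbf{x}$ if $p$ is only $H^1$; this does not change the argument.
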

\begin{proof}
Integration by parts provides
\begin{align}
    \int_{\Omega_1 \cup \Omega_2} G_{k_0}(\mathbf{x},\mathbf{y}) \alpha(\mathbf{y}) \nabla_{\mathbf{y}} p(\mathbf{y}) \dy
    =& -\int_{\Omega_1 \cup \Omega_2} \nabla_{\mathbf{y}} \left(G_{k_0}(\mathbf{x},\mathbf{y}) \alpha(\mathbf{y})\right) p(\mathbf{y}) \dy \nonumber \\
    &+ \int_{\Gamma_1} G_{k_0}(\mathbf{x},\mathbf{y}) \, \alpha_1(\mathbf{y}) p(\mathbf{y}) \normal_1(\mathbf{y}) \dy \nonumber \\
    &+ \int_{\Gamma_2} G_{k_0}(\mathbf{x},\mathbf{y}) \, \alpha_1(\mathbf{y}) p(\mathbf{y}) \normal_1(\mathbf{y}) \dy \nonumber \\
    &+ \int_{\Gamma_2} G_{k_0}(\mathbf{x},\mathbf{y}) \, \alpha_2(\mathbf{y}) p(\mathbf{y}) \normal_2(\mathbf{y}) \dy.
\end{align}
Here, the subscripts $\alpha_j(\mathbf{x}) = \rho_0/\rho_j(\mathbf{x}) - 1$ for $j=1,2$ emphasize that the material parameters at the interface are limit values from inside the subdomains. Mathematically, $\alpha_1(\mathbf{y}) = \lim_{\Omega_1 \ni \mathbf{z} \to \mathbf{y}} \alpha(\mathbf{z})$ for $\mathbf{y} \in \Gamma_1$ and similar for the other interfaces.

At $\Gamma_2$, we have $\normal_1 = -\normal_2$ since these normals have opposing directions (see Fig.~\ref{fig_geometry}). Furthermore, $p(\mathbf{x})$ is continuous across $\Gamma_2$ by the interface condition~\eqref{eq_transmission_pressure}. Hence,
\begin{align*}
    &\int_{\Gamma_2} G_{k_0}(\mathbf{x},\mathbf{y}) \, \alpha_1(\mathbf{y}) p(\mathbf{y}) \normal_1(\mathbf{y}) \dy + \int_{\Gamma_2} G_{k_0}(\mathbf{x},\mathbf{y}) \, \alpha_2(\mathbf{y}) p(\mathbf{y}) \normal_2(\mathbf{y}) \dy \nonumber \\
    &\quad = \int_{\Gamma_2} G_{k_0}(\mathbf{x},\mathbf{y}) \left(\alpha_2(\mathbf{y}) - \alpha_1(\mathbf{y})\right) p(\mathbf{y}) \normal_2(\mathbf{y}) \dy.
\end{align*}
Substitution of the integration by parts in Eq.~\eqref{eq_vie_convolution} yields Eq.~\eqref{eq_vsie_divergence}.
\end{proof}

Even though the VSIE~\eqref{eq_vsie_divergence} has no derivatives acting on the unknown pressure field anymore, gradients and divergences are still present. Let us eliminate the second-order derivatives by applying the exterior Helmholtz equation.

\begin{lemma}
\label{lemma_divergence}
The VSIE~\eqref{eq_vsie_divergence} is equivalent to
\begin{align}
    \frac{\rho_0}{\rho(\mathbf{x})} p(\mathbf{x}) &- \int_{\Omega_1 \cup \Omega_2} G_{k_0}(\mathbf{x},\mathbf{y}) \left(\beta(\mathbf{y}) - k_0^2 \alpha(\mathbf{y})\right) p(\mathbf{y}) \dy \nonumber \\
    &+ \nabla_{\mathbf{x}} \cdot \int_{\Omega_1 \cup \Omega_2} G_{k_0}(\mathbf{x},\mathbf{y}) \left( \nabla_{\mathbf{y}} \alpha(\mathbf{y})\right) p(\mathbf{y}) \dy \nonumber \\
    &+ \int_{\Gamma_1} \left(\frac{\partial}{\partial \normal_1(\mathbf{y})} G_{k_0}(\mathbf{x},\mathbf{y})\right) \alpha_1(\mathbf{y}) p(\mathbf{y}) \dy \nonumber \\
    &+ \int_{\Gamma_2} \left(\frac{\partial}{\partial \normal_2(\mathbf{y})} G_{k_0}(\mathbf{x},\mathbf{y})\right) \left(\alpha_2(\mathbf{y}) - \alpha_1(\mathbf{y})\right) p(\mathbf{y}) \dy \nonumber \\
    &= p_\mathrm{inc}(\mathbf{x}),
    &&\mathbf{x} \in \Omega.
    \label{eq_vsie_helmholtz}
\end{align}
\end{lemma}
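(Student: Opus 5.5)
The plan is to treat the three derivative-carrying terms of \eqref{eq_vsie_divergence} one at a time: the volume term with $\nabla_{\mathbf{y}}(G_{k_0}\alpha)$, and the two surface terms under $\nabla_{\mathbf{x}}\cdot$. The tools are the product rule, the identity $\nabla_{\mathbf{y}} G_{k_0}(\mathbf{x},\mathbf{y}) = -\nabla_{\mathbf{x}} G_{k_0}(\mathbf{x},\mathbf{y})$ (the Green's function depends only on $\mathbf{x}-\mathbf{y}$), and the distributional equation \eqref{eq_helmholtz_dirac}.

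\textbf{Volume term.} First I split the integrand with the product rule,
\[
\nabla_{\mathbf{y}}\bigl(G_{k_0}\alpha\bigr) = \alpha\,\nabla_{\mathbf{y}} G_{k_0} + G_{k_0}\,\nabla_{\mathbf{y}}\alpha .
\]
The part $G_{k_0}\nabla_{\mathbf{y}}\alpha\, p$ stays inside the divergence and gives directly the new second line of \eqref{eq_vsie_helmholtz}. For the other part I replace $\nabla_{\mathbf{y}}G_{k_0}$ by $-\nabla_{\mathbf{x}}G_{k_0}$ and move $\nabla_{\mathbf{x}}\cdot$ inside, interpreted in the distributional sense. This produces $-\nabla^2_{\mathbf{x}} G_{k_0}$. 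By \eqref{eq_helmholtz_dirac}, $-\nabla_{\mathbf{x}}^2 G_{k_0} = k_0^2 G_{k_0} + \delta(\mathbf{x}-\mathbf{y})$, so this part becomes
\[
k_0^2\int_{\Omega_1\cup\Omega_2} G_{k_0}(\mathbf{x},\mathbf{y})\,\alpha(\mathbf{y}) p(\mathbf{y})\dy + \alpha(\mathbf{x})p(\mathbf{x}).
\]
The integral merges with the $\beta$ term into $-(\beta - k_0^2\alpha)$. The local term combines with $p(\mathbf{x})$ to give $(1+\alpha(\mathbf{x}))p(\mathbf{x}) = \frac{\rho_0}{\rho(\mathbf{x})}p(\mathbf{x})$. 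For $\mathbf{x}\in\Omega_0$ this local term vanishes because $\alpha=0$ there, which is consistent with $\rho=\rho_0$.

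\textbf{Surface terms.} Here $\mathbf{x}\in\Omega$ lies off $\Gamma_1\cup\Gamma_2$, so the kernels are smooth and differentiation under the integral is classical. I use
\[
\nabla_{\mathbf{x}}\cdot\bigl(G_{k_0}\,\normal_j(\mathbf{y})\bigr) = \nabla_{\mathbf{x}}G_{k_0}\cdot\normal_j(\mathbf{y}) = -\frac{\partial}{\partial\normal_j(\mathbf{y})}G_{k_0}(\mathbf{x},\mathbf{y}).
\]
The leading minus signs in \eqref{eq_vsie_divergence} then turn into the plus-signed double-layer-type integrals over $\Gamma_1$ and $\Gamma_2$ in \eqref{eq_vsie_helmholtz}. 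Equivalence holds because every step is an identity applied to the same $p$, so each step can be read in both directions.

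\textbf{Main obstacle.} The delicate step is justifying the interchange of $\nabla_{\mathbf{x}}\cdot$ with the volume integral when the kernel's hypersingular second derivative produces the delta. I would argue in the sense of distributions: for $\phi=\alpha p$ compactly supported and integrable, $\nabla\cdot(\nabla G_{k_0}\star\phi) = \nabla^2 G_{k_0}\star\phi$. Then I apply the convolution theorem \eqref{eq_theorem_convolution} in the form $-(\nabla^2+k_0^2)(G_{k_0}\star\phi)=\phi$, which is legitimate by the same assumptions as in Theorem~\ref{theorem_convolution}. This yields the local term $\alpha(\mathbf{x})p(\mathbf{x})$ pointwise for $\mathbf{x}$ in the interior of each subdomain, where $\alpha p$ is continuous. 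Since $\mathbf{x}\in\Omega$ excludes the interfaces, the jumps of $\alpha$ do not interfere.
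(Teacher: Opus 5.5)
Your proposal is correct and follows essentially the same route as the paper's proof. Both use the product rule on $\nabla_{\mathbf{y}}(G_{k_0}\alpha)$, the sign flip $\nabla_{\mathbf{y}}G_{k_0}=-\nabla_{\mathbf{x}}G_{k_0}$, and the identity $-(\nabla_{\mathbf{x}}^2+k_0^2)G_{k_0}=\delta$. That identity turns $-\nabla_{\mathbf{x}}^2\,(G_{k_0}\star \alpha p)$ into $k_0^2\,G_{k_0}\star(\alpha p)+\alpha p$, which yields the factor $1+\alpha=\rho_0/\rho$. The surface divergences are then rewritten as normal derivatives of $G_{k_0}$, and all your signs check out.

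Your extra remarks are sensible refinements of the same argument: the distributional reading of the divergence/convolution interchange, and the smoothness of the surface kernels for $\mathbf{x}$ off $\Gamma_1\cup\Gamma_2$. One small caveat: a classical pointwise identity at $\mathbf{x}$ needs slightly more than continuity of $\alpha p$, namely local H\"older continuity, which interior regularity of $p$ supplies here. The distributional version holds exactly as you state it.
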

\begin{proof}
The Helmholtz Green's function~\eqref{eq_greens_function} satisfies $G_{k_0}(\mathbf{x}, \mathbf{y}) = G(\lvert\mathbf{x} - \mathbf{y}\rvert)$. Hence, it is translation invariant, symmetric, and $\nabla_\mathbf{x} G_{k_0}(\mathbf{x}, \mathbf{y}) = -\nabla_\mathbf{y} G_{k_0}(\mathbf{x}, \mathbf{y})$.

The surface integral in Eq.~\eqref{eq_vsie_divergence} can be written as
\begin{align}
    &\nabla_{\mathbf{x}} \cdot \int_{\Gamma_2} G_{k_0}(\mathbf{x},\mathbf{y}) \left(\alpha_1(\mathbf{y}) - \alpha_2(\mathbf{y})\right) p(\mathbf{y}) \normal_1(\mathbf{y}) \dy \nonumber \\
    &\quad = \int_{\Gamma_2} \left(\nabla_{\mathbf{x}} G_{k_0}(\mathbf{x},\mathbf{y}) \cdot \normal_1(\mathbf{y})\right) \left(\alpha_1(\mathbf{y}) - \alpha_2(\mathbf{y})\right) p(\mathbf{y}) \dy \nonumber \\
    &\quad = -\int_{\Gamma_2} \left(\frac{\partial}{\partial \normal_1(\mathbf{y})} G_{k_0}(\mathbf{x},\mathbf{y})\right) \left(\alpha_1(\mathbf{y}) - \alpha_2(\mathbf{y})\right) p(\mathbf{y}) \dy
    \label{eq_divergence_surface}
\end{align}
with the symmetry of Green's function and standard calculus identities. The integral over interface~$\Gamma_1$ can be rewritten similarly. This expression is easier to evaluate within a Galerkin discretization since the gradient of Green's function has an analytical expression. In fact, one recognizes the well-known double-layer boundary integral operator.

Concerning the divergence of the volume integral in Eq.~\eqref{eq_vsie_divergence}, let us perform the following manipulations:
\begin{align}
    &\nabla_{\mathbf{x}} \cdot \int_{\Omega_1 \cup \Omega_2} \nabla_{\mathbf{y}} \left(G_{k_0}(\mathbf{x},\mathbf{y}) \, \alpha(\mathbf{y})\right) p(\mathbf{y}) \dy \nonumber \\
    &= \nabla_{\mathbf{x}} \cdot \int_{\Omega_1 \cup \Omega_2} \left(\nabla_{\mathbf{y}} G_{k_0}(\mathbf{x},\mathbf{y})\right) \alpha(\mathbf{y}) p(\mathbf{y}) \dy + \nabla_{\mathbf{x}} \cdot \int_{\Omega_1 \cup \Omega_2} G_{k_0}(\mathbf{x},\mathbf{y}) \left( \nabla_{\mathbf{y}} \alpha(\mathbf{y})\right) p(\mathbf{y}) \dy \nonumber \\
    &= \nabla_{\mathbf{x}} \cdot \int_{\Omega_1 \cup \Omega_2} -\nabla_{\mathbf{x}} G_{k_0}(\mathbf{x},\mathbf{y}) \alpha(\mathbf{y}) p(\mathbf{y}) \dy + \nabla_{\mathbf{x}} \cdot \int_{\Omega_1 \cup \Omega_2} G_{k_0}(\mathbf{x},\mathbf{y}) \left( \nabla_{\mathbf{y}} \alpha(\mathbf{y})\right) p(\mathbf{y}) \dy \nonumber \\
    &= -\nabla_{\mathbf{x}}^2 \int_{\Omega_1 \cup \Omega_2} G_{k_0}(\mathbf{x},\mathbf{y}) \alpha(\mathbf{y}) p(\mathbf{y}) \dy + \nabla_{\mathbf{x}} \cdot \int_{\Omega_1 \cup \Omega_2} G_{k_0}(\mathbf{x},\mathbf{y}) \left( \nabla_{\mathbf{y}} \alpha(\mathbf{y})\right) p(\mathbf{y}) \dy
    \label{eq_divergence_volume}
\end{align}
with the product rule and the symmetry of Green's function. The term with the Laplacian~$\nabla^2$ can be simplified by considering the Green's function of the exterior Helmholtz equation~\eqref{eq_helmholtz_dirac}, which satisfies
\begin{align*}
    -(\nabla_{\mathbf{x}}^2 + k_0^2) G_{k_0}(\mathbf{x}, \mathbf{y}) = \delta(\mathbf{x} - \mathbf{y}) \text{ for } \mathbf{x}, \mathbf{y} \in \mathbb{R}^3.
\end{align*}
Integrating against $\alpha(\mathbf{y}) p(\mathbf{y})$ gives
\begin{align*}
    -\int_\Omega (\nabla_{\mathbf{x}}^2 + k_0^2) G_{k_0}(\mathbf{x}, \mathbf{y}) \alpha(\mathbf{y}) p(\mathbf{y}) \dy &= \int_\Omega \delta(\mathbf{x} - \mathbf{y})  \alpha(\mathbf{y}) p(\mathbf{y}) \dy, \\
    -(\nabla_{\mathbf{x}}^2 + k_0^2) \int_\Omega G_{k_0}(\mathbf{x}, \mathbf{y}) \alpha(\mathbf{y}) p(\mathbf{y}) \dy &= \alpha(\mathbf{x}) p(\mathbf{x}).
\end{align*}
Since $\alpha(\mathbf{y})$ has compact support in the interior subdomains, we can write
\begin{equation}
    -\nabla_{\mathbf{x}}^2 \int_{\Omega_1 \cup \Omega_2} G_{k_0}(\mathbf{x},\mathbf{y}) \alpha(\mathbf{y}) p(\mathbf{y}) \dy = k_0^2 \int_{\Omega_1 \cup \Omega_2} G_{k_0}(\mathbf{x},\mathbf{y}) \alpha(\mathbf{y}) p(\mathbf{y}) \dy + \alpha(\mathbf{x}) p(\mathbf{x}),
    \label{eq_laplacian_volume}
\end{equation}
a simpler expression for the Laplacian of the volume integral that is devoid of derivatives.

Substituting expressions~\eqref{eq_divergence_surface}, \eqref{eq_divergence_volume} and~\eqref{eq_laplacian_volume} into the VSIE~\eqref{eq_vsie_divergence} yields
\begin{align}
    &p(\mathbf{x}) - \int_{\Omega_1 \cup \Omega_2} G_{k_0}(\mathbf{x},\mathbf{y}) \beta(\mathbf{y}) p(\mathbf{y}) \dy + k_0^2 \int_{\Omega_1 \cup \Omega_2} G_{k_0}(\mathbf{x},\mathbf{y}) \alpha(\mathbf{y}) p(\mathbf{y}) \dy \nonumber \\
    &\quad + \alpha(\mathbf{x}) p(\mathbf{x}) + \nabla_{\mathbf{x}} \cdot \int_{\Omega_1 \cup \Omega_2} G_{k_0}(\mathbf{x},\mathbf{y}) \left( \nabla_{\mathbf{y}} \alpha(\mathbf{y})\right) p(\mathbf{y}) \dy \nonumber \\
    &\quad+ \int_{\Gamma_1} \left(\frac{\partial}{\partial \normal_1(\mathbf{y})} G_{k_0}(\mathbf{x},\mathbf{y})\right) \alpha_1(\mathbf{y}) p(\mathbf{y}) \dy \nonumber \\
    &\quad + \int_{\Gamma_2} \left(\frac{\partial}{\partial \normal_1(\mathbf{y})} G_{k_0}(\mathbf{x},\mathbf{y})\right) \left(\alpha_1(\mathbf{y}) - \alpha_2(\mathbf{y})\right) p(\mathbf{y}) \dy
    = p_\mathrm{inc}(\mathbf{x})
\end{align}
for $\mathbf{x} \in \Omega$. Notice that $1 + \alpha(\mathbf{x}) = \rho_0/\rho(\mathbf{x})$ by Eq.~\eqref{eq_alpha}. Joining similar integrals simplifies the formulation into Eq.~\eqref{eq_vsie_helmholtz}.
\end{proof}

\subsubsection{VSIE formulation}

To conclude the above manipulations on the VSIE, we state the following theorem.

\begin{theorem}
\label{theorem_vsie_omega}
The solution $p(\mathbf{x})$ of the heterogeneous Helmholtz transmission system~\eqref{eq_helmholtz_exterior}--\eqref{eq_sommerfeld} is implicitly given by the solution of the VSIE
\begin{align}
    \frac{\rho_0}{\rho(\mathbf{x})} p(\mathbf{x}) &- \int_{\Omega_1 \cup \Omega_2} G_{k_0}(\mathbf{x},\mathbf{y}) \left(\beta(\mathbf{y}) - k_0^2 \alpha(\mathbf{y})\right) p(\mathbf{y}) \dy \nonumber \\
    &+ \nabla_{\mathbf{x}} \cdot \int_{\Omega_1 \cup \Omega_2} G_{k_0}(\mathbf{x},\mathbf{y}) \left( \nabla_{\mathbf{y}} \alpha(\mathbf{y})\right) p(\mathbf{y}) \dy \nonumber \\
    &+ \int_{\Gamma_1} \left(\frac{\partial}{\partial \normal_1(\mathbf{y})} G_{k_0}(\mathbf{x},\mathbf{y})\right) \alpha_1(\mathbf{y}) p(\mathbf{y}) \dy \nonumber \\
    &+ \int_{\Gamma_2} \left(\frac{\partial}{\partial \normal_2(\mathbf{y})} G_{k_0}(\mathbf{x},\mathbf{y})\right) \left(\alpha_2(\mathbf{y}) - \alpha_1(\mathbf{y})\right) p(\mathbf{y}) \dy \nonumber \\
    &= p_\mathrm{inc}(\mathbf{x}),
    \label{eq_vsie_omega}
\end{align}
for $\mathbf{x} \in \Omega$.
\end{theorem}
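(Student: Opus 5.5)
The theorem is essentially the composition of the chain of equivalences established above, so the proof I have in mind is bookkeeping. The only new ingredient is linking the transmission system \eqref{eq_helmholtz_exterior}--\eqref{eq_sommerfeld} to the perturbed Helmholtz equation \eqref{eq_perturbed_helmholtz}.

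First, let $p$ solve the transmission system. I would multiply \eqref{eq_helmholtz_exterior} and \eqref{eq_helmholtz_subdomains} by $\rho_0/\rho$ and substitute \eqref{eq_alpha}--\eqref{eq_beta}, exactly as in the derivation preceding \eqref{eq_perturbed_helmholtz}. This gives \eqref{eq_perturbed_helmholtz} pointwise in each subdomain. The interface conditions \eqref{eq_transmission_pressure}--\eqref{eq_transmission_velocity} are what make this identity hold distributionally on all of $\Rthree$:
\begin{itemize}
\item continuity of $p$ together with continuity of $\rho^{-1}\traceN p$ means the flux $(1+\alpha)\nabla p = (\rho_0/\rho)\nabla p$ has a continuous normal component across $\Gamma_1$ and $\Gamma_2$;
\item hence no single- or double-layer distributions appear when the divergence is taken across the interfaces.
\end{itemize}
The Sommerfeld condition \eqref{eq_sommerfeld} selects the outgoing solution, which makes the convolution representation with $G_{k_0}$ in \eqref{eq_theorem_convolution} the correct, unique one. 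With this in hand, Theorem~\ref{theorem_convolution} yields the general VIE \eqref{eq_vie_general}.

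Second, I would chain the three lemmas. The divergence/convolution swap turns \eqref{eq_vie_general} into \eqref{eq_vie_convolution}. Integration by parts on $\Omega_1$ and $\Omega_2$ separately, using pressure continuity on $\Gamma_2$ and $\normal_1=-\normal_2$ there, gives \eqref{eq_vsie_divergence}. Lemma~\ref{lemma_divergence} then removes the Laplacian via the defining property \eqref{eq_helmholtz_dirac} of $G_{k_0}$, turns the surface divergences into double-layer kernels, and yields \eqref{eq_vsie_helmholtz}. That equation is literally \eqref{eq_vsie_omega}, and since each step is an equivalence, the conclusion follows.

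The main obstacle is the first step: justifying that the piecewise equations plus transmission conditions give a global distributional identity to which the convolution theorem applies. The term $\nabla\cdot(\alpha\nabla p)$ is only piecewise defined, because $\alpha$ jumps across the interfaces. I would handle this by testing \eqref{eq_perturbed_helmholtz} against smooth compactly supported functions and integrating by parts subdomain by subdomain. The boundary terms collect into jumps of $\traceD p$ and of $\rho^{-1}\traceN p$, both of which vanish by \eqref{eq_transmission_pressure}--\eqref{eq_transmission_velocity}. This consistency is also what lets the volume integrals in \eqref{eq_vie_general} be read over the open set $\Omega_1\cup\Omega_2$ with the interface contributions accounted for later in \eqref{eq_vsie_divergence}.
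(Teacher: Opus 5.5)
Your overall route is the paper's: its proof is just the composition of Theorem~\ref{theorem_convolution} with the three lemmas. The gap is in the interface argument you add, which is exactly the step you flagged as the main obstacle. When you test \eqref{eq_perturbed_helmholtz} subdomain by subdomain, the boundary terms cancel only for the \emph{total} flux. Condition \eqref{eq_transmission_velocity} makes $(1+\alpha)\,\partial_{\normal}p$ continuous; it does not make $\partial_{\normal}p$ and $\alpha\,\partial_{\normal}p$ continuous separately.

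When $\rho$ jumps, the distributional Laplacian of $p$ on $\Rthree$ carries single layers $[\![\partial_{\normal}p]\!]\,\delta_{\Gamma_j}$. The convolution theorem is applied to the split form $-\nabla^2p-k_0^2p=v$, so $v$ must carry the matching layers. These come from the \emph{distributional} divergence of the compactly supported field $\alpha\nabla p$, not from its piecewise divergence on $\Omega_1\cup\Omega_2$. Hence \eqref{eq_vie_general}, read over the open set with the classical divergence, is false.

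A concrete case: take $\rho_1=\rho_2\neq\rho_0$, $c_1=c_2$, and $f=0$ in $\Omega_1\cup\Omega_2$. There $\nabla^2p=-k_1^2p$ with $k_1=\omega/c_1$, and $\beta-k_1^2\alpha=k_1^2-k_0^2$. So \eqref{eq_vie_general} collapses to $p-(k_1^2-k_0^2)\int_{\Omega_1\cup\Omega_2}G_{k_0}(\mathbf{x},\mathbf{y})p(\mathbf{y})\dy=p_\mathrm{inc}(\mathbf{x})$, the constant-density Lippmann--Schwinger equation. Its unique solution has continuous normal derivative across $\Gamma_1$, which is incompatible with \eqref{eq_transmission_velocity} unless $\partial_{\normal}p$ vanishes on $\Gamma_1$.

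Nor are these terms ``accounted for later''. The surface terms in \eqref{eq_vsie_divergence} are double-layer terms built from the Dirichlet data $\alpha p\,\normal$. What actually happens is that, under your open-set reading, the swap step is not an equivalence either. Integrating by parts on each subdomain (with $G_{k_0}=G_{k_0}(\mathbf{x},\mathbf{y})$, traces taken from inside $\Omega_j$, and $\mathbf{x}\in\Omega$) gives
\[
\nabla_{\mathbf{x}}\cdot\int_{\Omega_1\cup\Omega_2}G_{k_0}\,\alpha\nabla_{\mathbf{y}}p\dy
=\int_{\Omega_1\cup\Omega_2}G_{k_0}\,\nabla_{\mathbf{y}}\cdot\left(\alpha\nabla_{\mathbf{y}}p\right)\dy
-\sum_{j=1}^{2}\int_{\partial\Omega_j}G_{k_0}\,\alpha_j\,\frac{\partial p}{\partial\normal_j}\dy .
\]
By \eqref{eq_transmission_velocity}, the surface terms are single-layer potentials of the jumps of $\partial_{\normal}p$ across $\Gamma_1$ and $\Gamma_2$, which are nonzero in general. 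Your chain therefore passes through a false intermediate equation and a false ``equivalence'' whose errors cancel, so the claim that each step is an equivalence fails.

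The repair is easy. Keep $\nabla\cdot(\alpha\nabla p)$ as the distributional divergence on $\Rthree$; this is also the only reading under which the paper's swap lemma, resting on $\partial(u\star v)=u\star\partial v$ for distributions, is valid. Then go directly from the global identity to \eqref{eq_vie_convolution} via $G_{k_0}\star\nabla\cdot(\alpha\nabla p)=\nabla_{\mathbf{x}}\cdot\bigl(G_{k_0}\star(\alpha\nabla p)\bigr)$. From there, the integration-by-parts lemma and Lemma~\ref{lemma_divergence} give \eqref{eq_vsie_omega} exactly as in the paper.
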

\begin{proof}
This is a direct consequence of Theorems~\ref{theorem_convolution}--\ref{lemma_divergence}.
\end{proof}

Notice that the VSIE~\eqref{eq_vsie_omega} includes common operators for integral equations like the single-layer volume integral, adjoint double-layer volume integral, and double-layer boundary integral operator. However, unlike standard formulations, the integrands also involve heterogeneous material parameters.

Remember that the main goal of the algebraic manipulations was to reduce the order of differentiation. Indeed, Eq.~\eqref{eq_vsie_omega} only has first-order derivatives of the Green's function and the material parameter $\alpha(\mathbf{x})$. By successfully shifting all the derivatives away from the pressure $p(\mathbf{x})$, we have established a representation that supports the extraction of boundary limits.

\subsubsection{VSIE at material interfaces}

There is an important subtlety about the VSIE~\eqref{eq_vsie_omega} in that it is defined for $\Omega = \Omega_0 \cup \Omega_1 \cup \Omega_2$, the union of open subdomains. Hence, it's not defined on the interfaces $\Gamma_1$ or $\Gamma_2$. At the same time, the surface integrals must evaluate the pressure field $p(\mathbf{x})$ on $\Gamma_1$ and $\Gamma_2$. The surface pressure is well-defined, as the interface condition~\eqref{eq_transmission_pressure} requires the pressure field $p(\mathbf{x})$ to be continuous across material interfaces. However, the problem is that the VSIE does not provide an equation to calculate this surface pressure. To obtain a closed system of equations, we will extend the VSIE's domain to the interfaces by taking limit values, i.e., the Dirichlet traces of the VSIE towards the interfaces.

\begin{lemma}
\label{theorem_vsie_gamma}
The Dirichlet traces of the VSIE~\eqref{eq_vsie} are given by
\begin{align}
    \frac{1}{2}\left(\frac{\rho_0}{\rho_1(\mathbf{x})} + 1\right) p(\mathbf{x}) &- \int_{\Omega_1 \cup \Omega_2} G_{k_0}(\mathbf{x},\mathbf{y}) \left(\beta(\mathbf{y}) - k_0^2 \alpha(\mathbf{y})\right) p(\mathbf{y}) \dy \nonumber \\
    &+ \nabla_{\mathbf{x}} \cdot \int_{\Omega_1 \cup \Omega_2} G_{k_0}(\mathbf{x},\mathbf{y}) \left( \nabla_{\mathbf{y}} \alpha(\mathbf{y})\right) p(\mathbf{y}) \dy \nonumber \\
    &+ \int_{\Gamma_1} \left(\frac{\partial}{\partial \normal_1(\mathbf{y})} G_{k_0}(\mathbf{x},\mathbf{y})\right) \alpha_1(\mathbf{y}) p(\mathbf{y}) \dy \nonumber \\
    &+\int_{\Gamma_2} \left(\frac{\partial}{\partial \normal_2(\mathbf{y})} G_{k_0}(\mathbf{x},\mathbf{y})\right) \left(\alpha_2(\mathbf{y}) - \alpha_1(\mathbf{y})\right) p(\mathbf{y}) \dy \nonumber \\
    &= p_\mathrm{inc}(\mathbf{x})
    \label{eq_vsie_g1}
\end{align}
for $\mathbf{x} \in \Gamma_1$ and
\begin{align}
    \frac{1}{2}\left(\frac{\rho_0}{\rho_1(\mathbf{x})} + \frac{\rho_0}{\rho_2(\mathbf{x})}\right) p(\mathbf{x}) &- \int_{\Omega_1 \cup \Omega_2} G_{k_0}(\mathbf{x},\mathbf{y}) \left(\beta(\mathbf{y}) - k_0^2 \alpha(\mathbf{y})\right) p(\mathbf{y}) \dy \nonumber \\
    &+ \nabla_{\mathbf{x}} \cdot \int_{\Omega_1 \cup \Omega_2} G_{k_0}(\mathbf{x},\mathbf{y}) \left( \nabla_{\mathbf{y}} \alpha(\mathbf{y})\right) p(\mathbf{y}) \dy \nonumber \\
    &+ \int_{\Gamma_1} \left(\frac{\partial}{\partial \normal_1(\mathbf{y})} G_{k_0}(\mathbf{x},\mathbf{y})\right) \alpha_1(\mathbf{y}) p(\mathbf{y}) \dy \nonumber \\
    &+\int_{\Gamma_2} \left(\frac{\partial}{\partial \normal_2(\mathbf{y})} G_{k_0}(\mathbf{x},\mathbf{y})\right) \left(\alpha_2(\mathbf{y}) - \alpha_1(\mathbf{y})\right) p(\mathbf{y}) \dy \nonumber \\
    &= p_\mathrm{inc}(\mathbf{x})
    \label{eq_vsie_g2}
\end{align}
for $\mathbf{x} \in \Gamma_2$.
\end{lemma}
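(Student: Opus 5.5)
The plan is to take the Dirichlet trace of the VSIE~\eqref{eq_vsie_omega} from Theorem~\ref{theorem_vsie_omega} term by term. I will let $\mathbf{x} \in \Omega$ tend to a point of $\Gamma_1$ or $\Gamma_2$ and sort the terms into two groups: those that are continuous across the interfaces, and the double-layer surface potentials, which jump. Because the pressure is continuous by~\eqref{eq_transmission_pressure}, the limits from the two sides of an interface must agree. I will do the computation from both sides as a consistency check, which also confirms the averaged coefficients in~\eqref{eq_vsie_g1} and~\eqref{eq_vsie_g2}.

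\textbf{Continuous terms.} First I will argue that the two volume terms are continuous across $\Gamma_1$ and $\Gamma_2$.
\begin{itemize}
\item The term $\int_{\Omega_1\cup\Omega_2} G_{k_0}(\mathbf{x},\mathbf{y})(\beta-k_0^2\alpha)p\dy$ is a Newton potential with a bounded, compactly supported density. Such a potential is $\mathcal{C}^1$ on $\Rthree$, since the kernel $G_{k_0}$ and its gradient are weakly singular, of order $\lvert\mathbf{x}-\mathbf{y}\rvert^{-1}$ and $\lvert\mathbf{x}-\mathbf{y}\rvert^{-2}$ respectively.
\item The term $\nabla_{\mathbf{x}}\cdot\int G_{k_0}(\mathbf{x},\mathbf{y})\,(\nabla_{\mathbf{y}}\alpha)\,p\dy$ equals $\int \nabla_{\mathbf{x}}G_{k_0}\cdot(\nabla_{\mathbf{y}}\alpha)\,p\dy$. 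This is again an integral with a weakly singular kernel of order $\lvert\mathbf{x}-\mathbf{y}\rvert^{-2}$ against a bounded vector density, so it is continuous on $\Rthree$.
\end{itemize}
Both limits therefore simply evaluate the same integrals at $\mathbf{x}\in\Gamma_j$. This requires $\nabla\alpha$ to be bounded up to the interfaces, which I will assume as part of the regularity hypothesis $\rho_j\in\mathcal{C}^1$ up to the boundary. Similarly, $p_{\mathrm{inc}}$ is continuous, away from the support of $f$ or in general as a Newton potential.

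\textbf{Jump terms.} The surface terms are double-layer potentials $\int_{\Gamma_j}\partial_{\normal_j(\mathbf{y})}G_{k_0}(\mathbf{x},\mathbf{y})\,\mu_j(\mathbf{y})\dy$, with densities $\mu_1=\alpha_1 p$ and $\mu_2=(\alpha_2-\alpha_1)p$, which are continuous on the interfaces. I will invoke the classical jump relation: the limit from the side into which $\normal_j$ points equals $+\tfrac12\mu_j$ plus the direct boundary value, and the limit from the other side equals $-\tfrac12\mu_j$ plus the direct boundary value. A double-layer potential on $\Gamma_j$ is smooth away from $\Gamma_j$, so it contributes no jump at the other interface.

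\textbf{Coefficient bookkeeping.} I then combine the jumps with the limit of the multiplication term $\frac{\rho_0}{\rho}p$.
\begin{itemize}
\item On $\Gamma_1$, approaching from $\Omega_0$, where $\normal_1$ points: $\frac{\rho_0}{\rho}=1$, so the coefficient is $1+\tfrac12\alpha_1=\tfrac12\bigl(\tfrac{\rho_0}{\rho_1}+1\bigr)$.
\item On $\Gamma_1$, approaching from $\Omega_1$: the coefficient is $\frac{\rho_0}{\rho_1}-\tfrac12\alpha_1$, which is the same value.
\item On $\Gamma_2$, approaching from $\Omega_1$, into which $\normal_2$ points: the coefficient is $\frac{\rho_0}{\rho_1}+\tfrac12(\alpha_2-\alpha_1)=\tfrac12\bigl(\tfrac{\rho_0}{\rho_1}+\tfrac{\rho_0}{\rho_2}\bigr)$.
\item On $\Gamma_2$, approaching from $\Omega_2$: the coefficient is $\frac{\rho_0}{\rho_2}-\tfrac12(\alpha_2-\alpha_1)$, which is again the same.
\end{itemize}
Collecting the continuous terms and the direct values of the double layers then gives~\eqref{eq_vsie_g1} and~\eqref{eq_vsie_g2}.

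\textbf{Main obstacle.} I expect the hardest step to be justifying the jump relations rigorously on merely Lipschitz interfaces with variable densities $\mu_j$. The factor $\tfrac12$ holds only at points where $\Gamma_j$ has a tangent plane. At edges and corners it is replaced by a solid-angle factor, so the identities~\eqref{eq_vsie_g1}--\eqref{eq_vsie_g2} should be understood almost everywhere on $\Gamma_j$, or in the trace sense in $H^{1/2}(\Gamma_j)$. The surface integrals are then interpreted as principal values. I would first prove the jump relation for continuous densities on a smooth (or piecewise $\mathcal{C}^2$) surface, using the standard splitting $\mu_j(\mathbf{y})=\mu_j(\mathbf{x})+(\mu_j(\mathbf{y})-\mu_j(\mathbf{x}))$ together with Gauss' identity for the constant density. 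I would then cite the Lipschitz extension, for instance from~\cite{sauter2011boundary, steinbach2008numerical}, rather than reprove it.
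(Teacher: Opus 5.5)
Your proposal is correct and takes essentially the paper's route. The paper takes the interior traces (from $\Omega_1$ onto $\Gamma_1$ and from $\Omega_2$ onto $\Gamma_2$), applies the double-layer jump relation with the $-\tfrac12$ term, treats the remaining terms as continuous, and does the same coefficient algebra, only remarking that the exterior traces give the same equation. Your explicit continuity argument for the two volume terms, and your caveat that the factor $\tfrac12$ holds only almost everywhere on Lipschitz interfaces such as the cubes used later, are sound additions that the paper leaves implicit.
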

\begin{proof}
This is a direct consequence of the jump relations of the double-layer boundary integral operator (e.g.,~\cite[Th.~3.3.14]{sauter2011boundary}). Precisely,
\begin{align}
    &\trace_{\mathrm{D},\mathbf{x}}^- \int_{\Gamma_1} \left(\frac{\partial}{\partial \normal_1(\mathbf{y})} G_{k_0}(\mathbf{x},\mathbf{y})\right) \alpha_1(\mathbf{y}) p(\mathbf{y}) \dy \nonumber \\
    &\quad = -\frac12 \alpha_1(\mathbf{x}) p(\mathbf{x}) + \int_{\Gamma_1} \left(\frac{\partial}{\partial \normal_1(\mathbf{y})} G_{k_0}(\mathbf{x},\mathbf{y})\right) \alpha_1(\mathbf{y}) p(\mathbf{y}) \dy, \quad \mathbf{x} \in \Gamma_1,
    \label{eq_jump_g1}
\end{align}
and
\begin{align}
    &\trace_{\mathrm{D},\mathbf{x}}^- \int_{\Gamma_2} \left(\frac{\partial}{\partial \normal_2(\mathbf{y})} G_{k_0}(\mathbf{x},\mathbf{y})\right) \left(\alpha_2(\mathbf{y}) - \alpha_1(\mathbf{y})\right) p(\mathbf{y}) \dy \nonumber \\
    &\quad = -\frac12 \left(\alpha_2(\mathbf{y}) - \alpha_1(\mathbf{y})\right) p(\mathbf{x}) \nonumber \\
    &\quad + \int_{\Gamma_2} \left(\frac{\partial}{\partial \normal_2(\mathbf{y})} G_{k_0}(\mathbf{x},\mathbf{y})\right) \left(\alpha_2(\mathbf{y}) - \alpha_1(\mathbf{y})\right) p(\mathbf{y}) \dy, \quad \mathbf{x} \in \Gamma_2.
    \label{eq_jump_g2}
\end{align}
Hence, taking the interior Dirichlet trace from $\Omega_1$ to $\Gamma_1$ of the VSIE~\eqref{eq_vsie} yields
\begin{align}
    \frac{\rho_0}{\rho_1(\mathbf{x})} p(\mathbf{x}) &- \int_{\Omega_1 \cup \Omega_2} G_{k_0}(\mathbf{x},\mathbf{y}) \left(\beta(\mathbf{y}) - k_0^2 \alpha(\mathbf{y})\right) p(\mathbf{y}) \dy \nonumber \\
    &+ \nabla_{\mathbf{x}} \cdot \int_{\Omega_1 \cup \Omega_2} G_{k_0}(\mathbf{x},\mathbf{y}) \left( \nabla_{\mathbf{y}} \alpha(\mathbf{y})\right) p(\mathbf{y}) \dy \nonumber \\
    &- \frac12 \alpha_1(\mathbf{x}) p(\mathbf{x}) \nonumber\\
    &+ \int_{\Gamma_1} \left(\frac{\partial}{\partial \normal_1(\mathbf{y})} G_{k_0}(\mathbf{x},\mathbf{y})\right) \alpha_1(\mathbf{y}) p(\mathbf{y}) \dy \nonumber \\
    &+ \int_{\Gamma_2} \left(\frac{\partial}{\partial \normal_2(\mathbf{y})} G_{k_0}(\mathbf{x},\mathbf{y})\right) \left(\alpha_2(\mathbf{y}) - \alpha_1(\mathbf{y})\right) p(\mathbf{y}) \dy \nonumber \\
    &= p_\mathrm{inc}(\mathbf{x}),
    &&\mathbf{x} \in \Gamma_1,
    \label{eq_jump_vsie_g1}
\end{align}
by the jump relation~\eqref{eq_jump_g1} and the fact that the limits of the other integrals can be evaluated directly. The definition of $\alpha(\mathbf{x})$ provides
\begin{align*}
    \frac{\rho_0}{\rho_1(\mathbf{x})} - \frac12 \alpha_1(\mathbf{x})
    &= \frac{\rho_0}{\rho_1(\mathbf{x})} - \frac12 \left(\frac{\rho_0}{\rho_1(\mathbf{x})} - 1\right)
    = \frac12 \left(\frac{\rho_0}{\rho_1(\mathbf{x})} + 1\right),
\end{align*}
and substitution in Eq.~\eqref{eq_jump_vsie_g1} yields Eq.~\eqref{eq_vsie_g1}. Notice that taking the exterior Dirichlet trace from $\Omega_0$ to $\Gamma_1$ will result in the same equation. Now, taking the interior Dirichlet trace from $\Omega_2$ to $\Gamma_2$ of the VSIE~\eqref{eq_vsie} yields
\begin{align}
    \frac{\rho_0}{\rho_2(\mathbf{x})} p(\mathbf{x}) &- \int_{\Omega_1 \cup \Omega_2} G_{k_0}(\mathbf{x},\mathbf{y}) \left(\beta(\mathbf{y}) - k_0^2 \alpha(\mathbf{y})\right) p(\mathbf{y}) \dy \nonumber \\
    &+ \nabla_{\mathbf{x}} \cdot \int_{\Omega_1 \cup \Omega_2} G_{k_0}(\mathbf{x},\mathbf{y}) \left( \nabla_{\mathbf{y}} \alpha(\mathbf{y})\right) p(\mathbf{y}) \dy \nonumber \\
    &+ \int_{\Gamma_1} \left(\frac{\partial}{\partial \normal_1(\mathbf{y})} G_{k_0}(\mathbf{x},\mathbf{y})\right) \alpha_1(\mathbf{y}) p(\mathbf{y}) \dy \nonumber \\
    &- \frac12 (\alpha_2(\mathbf{x}) - \alpha_1(\mathbf{x})) p(\mathbf{x}) \nonumber\\
    &+ \int_{\Gamma_2} \left(\frac{\partial}{\partial \normal_2(\mathbf{y})} G_{k_0}(\mathbf{x},\mathbf{y})\right) \left(\alpha_2(\mathbf{y}) - \alpha_1(\mathbf{y})\right) p(\mathbf{y}) \dy \nonumber \\
    &= p_\mathrm{inc}(\mathbf{x}),
    &&\mathbf{x} \in \Gamma_2,
    \label{eq_jump_vsie_g2}
\end{align}
by the jump relation~\eqref{eq_jump_g2}. Also,
\begin{align*}
    \frac{\rho_0}{\rho_2(\mathbf{x})} - \frac12 (\alpha_2(\mathbf{x}) - \alpha_1(\mathbf{x}))
    &= \frac{\rho_0}{\rho_2(\mathbf{x})} - \frac12 \left(\frac{\rho_0}{\rho_2(\mathbf{x})} - 1\right) + \frac12 \left(\frac{\rho_0}{\rho_1(\mathbf{x})} - 1\right) \\
    &= \frac12 \left(\frac{\rho_0}{\rho_2(\mathbf{x})} + \frac{\rho_0}{\rho_1(\mathbf{x})}\right),
\end{align*}
and substitution in Eq.~\eqref{eq_jump_vsie_g2} yields Eq.~\eqref{eq_vsie_g2}. Notice that, again, taking the exterior Dirichlet trace from $\Omega_1$ to $\Gamma_2$ will result in the same equation.
\end{proof}

Given the similarities between the VSIEs, let us define the mass function
\begin{equation}
    m(\mathbf{x}) = 
    \begin{cases}
        1 & \text{for } \mathbf{x} \in \Omega_0, \\
        \frac{\rho_0}{\rho_1(\mathbf{x})} & \text{for } \mathbf{x} \in \Omega_1, \\
        \frac{\rho_0}{\rho_2(\mathbf{x})} & \text{for } \mathbf{x} \in \Omega_2, \\
        \frac{1}{2}\left(\frac{\rho_0}{\rho_1(\mathbf{x})}+1\right) & \text{for } \mathbf{x} \in \Gamma_1, \\
        \frac{1}{2}\left(\frac{\rho_0}{\rho_1(\mathbf{x})} + \frac{\rho_0}{\rho_2(\mathbf{x})}\right)& \text{for } \mathbf{x} \in \Gamma_2.
    \end{cases}
    \label{eq_mass_function}
\end{equation}
Notice that this expression can be interpreted as the local average of the density contrast $\rho_0/\rho(\mathbf{x})$. This definition simplifies the final VSIE formulation.

\begin{theorem}
\label{theorem_vsie_final}
The solution $p(\mathbf{x})$ of the heterogeneous Helmholtz transmission system~\eqref{eq_helmholtz_exterior}--\eqref{eq_sommerfeld} is implicitly given by the solution of the VSIE
\begin{align}
    m(\mathbf{x}) p(\mathbf{x}) &- \int_{\Omega_1 \cup \Omega_2} G_{k_0}(\mathbf{x},\mathbf{y}) \left(\beta(\mathbf{y}) - k_0^2 \alpha(\mathbf{y})\right) p(\mathbf{y}) \dy \nonumber \\
    &+ \nabla_{\mathbf{x}} \cdot \int_{\Omega_1 \cup \Omega_2} G_{k_0}(\mathbf{x},\mathbf{y}) \left( \nabla_{\mathbf{y}} \alpha(\mathbf{y})\right) p(\mathbf{y}) \dy \nonumber \\
    &+ \int_{\Gamma_1} \left(\frac{\partial}{\partial \normal_1(\mathbf{y})} G_{k_0}(\mathbf{x},\mathbf{y})\right) \alpha_1(\mathbf{y}) p(\mathbf{y}) \dy \nonumber \\
    &+ \int_{\Gamma_2} \left(\frac{\partial}{\partial \normal_2(\mathbf{y})} G_{k_0}(\mathbf{x},\mathbf{y})\right) \left(\alpha_2(\mathbf{y}) - \alpha_1(\mathbf{y})\right) p(\mathbf{y}) \dy \nonumber \\
    &= p_\mathrm{inc}(\mathbf{x}),
    \label{eq_vsie}
\end{align}
for $\mathbf{x} \in \Rthree$.
\end{theorem}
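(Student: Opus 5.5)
The plan is to assemble Theorem~\ref{theorem_vsie_final} from Theorem~\ref{theorem_vsie_omega} and Lemma~\ref{theorem_vsie_gamma} through a case distinction on the location of $\mathbf{x}$. The partition is
\[
\Rthree = \Omega_0 \cup \Omega_1 \cup \Omega_2 \cup \Gamma_1 \cup \Gamma_2 ,
\]
which is disjoint because the subdomains are open and the interfaces are their common boundaries. On each piece I will check that the equation~\eqref{eq_vsie} coincides with an identity already proved. All integral terms in~\eqref{eq_vsie} are literally the same as in~\eqref{eq_vsie_omega}, \eqref{eq_vsie_g1} and~\eqref{eq_vsie_g2}. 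Only the coefficient of $p(\mathbf{x})$ differs, so the verification reduces to matching $m(\mathbf{x})$ from~\eqref{eq_mass_function} with the local multiplier on each piece.

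For $\mathbf{x} \in \Omega_j$, $j=0,1,2$, Theorem~\ref{theorem_vsie_omega} gives~\eqref{eq_vsie_omega} with coefficient $\rho_0/\rho(\mathbf{x})$. In $\Omega_0$ this equals $1$ because $\rho=\rho_0$ there, and in $\Omega_1$, $\Omega_2$ it equals $\rho_0/\rho_1$ and $\rho_0/\rho_2$. These are exactly the first three cases of $m$. For $\mathbf{x}\in\Gamma_1$ and $\mathbf{x}\in\Gamma_2$, Lemma~\ref{theorem_vsie_gamma} gives~\eqref{eq_vsie_g1} and~\eqref{eq_vsie_g2}, whose coefficients are the last two cases of $m$. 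Hence~\eqref{eq_vsie} holds pointwise on all of $\Rthree$. The right-hand side $p_\mathrm{inc}$ from~\eqref{eq_pinc} is defined on all of $\Rthree$ and is continuous across the interfaces, being a Newton potential of a compactly supported integrable density. Its trace is therefore just its value, which is consistent with the lemma.

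The main obstacle is the interface case, namely the justification that the traces in Lemma~\ref{theorem_vsie_gamma} are well defined and one-sided independent. I would make explicit that the only discontinuous term is the double-layer potential, with the jump relation of~\cite[Th.~3.3.14]{sauter2011boundary}. The volume terms are continuous across $\Gamma_1$ and $\Gamma_2$. For $G_{k_0}\star((\beta-k_0^2\alpha)p)$ this follows from the weak singularity of the kernel. For $\nabla_{\mathbf{x}}\cdot\big(G_{k_0}\star(p\nabla\alpha)\big)$ it requires a mild regularity assumption on $p\nabla\alpha$, for instance bounded data, so that the gradient of the Newton potential is continuous; I would assume this. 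The double-layer on $\Gamma_1$ is continuous at $\Gamma_2$ and vice versa, since the interfaces are disjoint. Finally, the interior and exterior limits agree: the exterior trace contributes $+\tfrac12$ of the jump density, and the volume coefficient changes by exactly the same amount, e.g.\ from $\rho_0/\rho_1$ to $1$ at $\Gamma_1$. Both sides therefore give the averaged value in $m$, which confirms the remarks at the end of the lemma's proof.
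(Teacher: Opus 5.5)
Your proposal is correct and follows the paper's proof, which consists exactly of substituting the mass function \eqref{eq_mass_function} into \eqref{eq_vsie_omega}, \eqref{eq_vsie_g1} and \eqref{eq_vsie_g2} on the pieces $\Omega_0,\Omega_1,\Omega_2,\Gamma_1,\Gamma_2$. Your extra check that the two one-sided traces give the same interface equation goes beyond what the paper writes but agrees with its remarks. One small correction: continuity of $p_\mathrm{inc}$ across the interfaces does not follow from integrability of the source alone, since the Newton potential of an $L^1$ density can be unbounded. You need, e.g., a bounded source or one supported away from $\Gamma_1 \cup \Gamma_2$, just as you already assume bounded data for the adjoint double-layer volume term.
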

\begin{proof}
This is a direct consequence of substituting Eq.~\eqref{eq_mass_function} into Eqns~\eqref{eq_vsie_omega}, \eqref{eq_vsie_g1} and~\eqref{eq_vsie_g2} in Theorems~\ref{theorem_vsie_omega} and~\ref{theorem_vsie_gamma}.
\end{proof}

\subsubsection{Special cases}

Let us consider several relevant special cases in which the VSIE~\eqref{eq_vsie} simplifies to shorter expressions.

\begin{remark}
In the case of a single domain~$\Omega_1$, we can eliminate $\Omega_2$ from the geometry and follow the same derivation to reach
\begin{align}
    &m(\mathbf{x}) p(\mathbf{x}) - \int_{\Omega_1} G_{k_0}(\mathbf{x},\mathbf{y}) \left(\beta(\mathbf{y}) - k_0^2 \alpha(\mathbf{y})\right) p(\mathbf{y}) \dy \nonumber \\
    &\quad + \nabla_{\mathbf{x}} \cdot \int_{\Omega_1} G_{k_0}(\mathbf{x},\mathbf{y}) \left( \nabla_{\mathbf{y}} \alpha(\mathbf{y})\right) p(\mathbf{y}) \dy \nonumber \\
    &\quad + \int_{\Gamma_1} \left(\frac{\partial}{\partial \normal_1(\mathbf{y})} G_{k_0}(\mathbf{x},\mathbf{y})\right) \alpha_1(\mathbf{y}) p(\mathbf{y}) \dy
    = p_\mathrm{inc}(\mathbf{x})
    \label{eq_vsie_single_domain}
\end{align}
for $\mathbf{x} \in \Rthree$.
\end{remark}

\begin{remark}
For a globally continuous density $\rho(\mathbf{x}) \in \mathcal{C}(\mathbb{R}^3)$, we have $\alpha_1(\mathbf{x}) = 0$ for $\mathbf{x} \in \Gamma_1$ and $\alpha_2(\mathbf{x}) = \alpha_1(\mathbf{x})$ for $\mathbf{x} \in \Gamma_2$. Then, the surface integrals vanish, and the VSIE~\eqref{eq_vsie} simplifies to the VIE given by
\begin{align}
    &m(\mathbf{x}) p(\mathbf{x}) - \int_{\Omega_1 \cup \Omega_2} G_{k_0}(\mathbf{x},\mathbf{y}) \left(\beta(\mathbf{y}) - k_0^2 \alpha(\mathbf{y})\right) p(\mathbf{y}) \dy \nonumber \\
    &\quad + \nabla_{\mathbf{x}} \cdot \int_{\Omega_1 \cup \Omega_2} G_{k_0}(\mathbf{x},\mathbf{y}) \left( \nabla_{\mathbf{y}} \alpha(\mathbf{y})\right) p(\mathbf{y}) \dy
    = p_\mathrm{inc}(\mathbf{x})
\end{align}
for $\mathbf{x} \in \Rthree$.
\end{remark}

\begin{remark}
When the density is constant everywhere, i.e., $\rho(\mathbf{x}) = \rho_0$ for all $\mathbf{x} \in \Rthree$, we have $\alpha(\mathbf{x}) = 0$ and the VSIE~\eqref{eq_vsie} simplifies to
\begin{align}
    p(\mathbf{x}) - \int_{\Omega_1 \cup \Omega_2} G_{k_0}(\mathbf{x},\mathbf{y}) \beta(\mathbf{y}) p(\mathbf{y}) \dy = p_\mathrm{inc}(\mathbf{x}), \quad \mathbf{x} \in \Rthree.
    \label{eq_lippmann_schwinger}
\end{align}
This is the well-studied Lippmann-Schwinger equation~\cite{kirsch2009operator}.
\end{remark}

\subsection{Numerical discretization}
\label{sec_galerkin}

We choose to write the integro-differential equation~\eqref{eq_vsie} in its weak form, which will allow solving the VSIE with Galerkin or collocation discretization.

\subsubsection{Computational domain}

The VSIE~\eqref{eq_vsie} must be solved for the full interior domain $\Omega_1 \cup \Omega_2 \cup \Gamma_1 \cup \Gamma_2$, the union of the open subdomains and material interfaces. After obtaining the interior pressure, the exterior field $p(\mathbf{x})$ for $\mathbf{x} \in \Omega_0$ can be calculated directly by applying the VSIE as a representation formula to the interior solution. 

\begin{figure}[htbp]
    \centering
    \begin{tikzpicture}[scale=0.9]
        \def\xsteps{{0,1,2,2.5,4,5.5,7}}
        \def\ysteps{{0,0.8,1.6,2.6,3.6,4.4,5.2}}
        \draw[thick] (0,0) rectangle (7,5.2);
        \draw[thick] (2,1.6) rectangle (5,3.6);
        \foreach \x in {0,1,2,3.5,5,6,7} {
            \draw[dashed] (\x,0) -- (\x,5.2);
        }
        \foreach \y in {0,0.8,1.6,2.6,3.6,4.4,5.2} {
            \draw[dashed] (0,\y) -- (7,\y);
        }
        \draw[<->] (2, -0.5) -- (3.5, -0.5) node[midway,below] {$\Delta x_i$};
        \draw[<->] (-0.5, 1.6) -- (-0.5, 2.6) node[midway,left] {$\Delta y_j$};
    \end{tikzpicture}
    \caption{An example nonuniform cuboid mesh for nested cubes. The figure displays a 2D slice of the 3D mesh (dashed lines), with different element sizes in the two subdomains (bold lines) and cuboid faces that exactly match the interface.}
    \label{fig_grid}
\end{figure}

We use a cuboid mesh in the interior domain. That is, the bounded domains $\Omega_1$ and $\Omega_2$ are subdivided into $N_x \times N_y \times N_z$ rectangular cuboids of sizes $\Delta x_i \times \Delta y_j \times \Delta z_k$. No hanging nodes are allowed, meaning that each cuboid's face connects to the face of its neighboring element. At the interfaces $\Gamma_1$ and $\Gamma_2$, we use rectangular elements. Importantly, we generate the meshes such that the rectangular surface grid matches exactly the faces of the volumetric cuboid grid. To achieve this geometric fit between the surface and volume grids, we use nonuniform cuboid meshes. See Figure~\ref{fig_grid} for a slice of a nonuniform grid and Fig.~\ref{fig_benchmark4_3D} for a three-dimensional visualization. Finally, we emphasize that the weak formulation and numerical discretization can readily be applied to other mesh types, such as tetrahedra, but are left for future implementations. Notice that cuboid grids, also called voxel meshes, are commonly used in computational engineering, for example, to store biomedical image data~\cite{almuna2026full}.

\subsubsection{Weak formulation}

Let us define $\bar\Omega = \Omega_1 \cup \Omega_2 \cup \Gamma_1 \cup \Gamma_2$, the union of the bounded, open subdomains and the interfaces. Testing the VSIE~\eqref{eq_vsie} over the entire interior domain~$\bar\Omega$ against a function $\psi$ yields
\begin{align}
    &\int_{\bar\Omega} m(\mathbf{x}) p(\mathbf{x}) \psi(\mathbf{x}) \dx \nonumber \\
    &\quad - \int_{\bar\Omega} \int_{\Omega_1 \cup \Omega_2} G_{k_0}(\mathbf{x},\mathbf{y}) \left(\beta(\mathbf{y}) - k_0^2 \alpha(\mathbf{y})\right) p(\mathbf{y}) \dy \, \psi(\mathbf{x}) \dx \nonumber \\
    &\quad + \int_{\bar\Omega} \nabla_{\mathbf{x}} \cdot \int_{\Omega_1 \cup \Omega_2} G_{k_0}(\mathbf{x},\mathbf{y}) \left( \nabla_{\mathbf{y}} \alpha(\mathbf{y})\right) p(\mathbf{y}) \dy \, \psi(\mathbf{x}) \dx \nonumber \\
    &\quad+ \int_{\bar\Omega}\int_{\Gamma_1} \left(\frac{\partial}{\partial \normal_1(\mathbf{y})} G_{k_0}(\mathbf{x},\mathbf{y})\right) \alpha_1(\mathbf{y}) p(\mathbf{y}) \dy \, \psi(\mathbf{x})\dx \nonumber \\
    &\quad + \int_{\bar\Omega} \int_{\Gamma_2} \left(\frac{\partial}{\partial \normal_2(\mathbf{y})} G_{k_0}(\mathbf{x},\mathbf{y})\right) \left(\alpha_2(\mathbf{y}) - \alpha_1(\mathbf{y})\right) p(\mathbf{y}) \dy \, \psi(\mathbf{x}) \dx \nonumber \\
    &\quad = \int_{\bar\Omega} p_\mathrm{inc}(\mathbf{x}) \psi(\mathbf{x}) \dx,
    \label{eq_weak_form_global}
\end{align}
where $\dx$ denotes integration with respect to the standard volume measure for $\mathbf{x}\in\Omega_1\cup\Omega_2$ and the surface measure for $\mathbf{x}\in\Gamma_1\cup\Gamma_2$. Notice that, as $\bar\Omega = \Omega_1 \cup \Omega_2 \cup \Gamma_1 \cup \Gamma_2$, testing functions have to be defined on the open subdomains and the interfaces.

As usual, the pressure field $p(\mathbf{x})$ was assumed to be in $H^1(\Omega)$. This allowed us to obtain the VSIE~\eqref{eq_vsie} for the solution of the Helmholtz equation~\eqref{eq_perturbed_helmholtz}. However, imposing $H^1$~conformity on the discrete spaces is computationally demanding as it requires, at least, linear function spaces. At the same time, the VSIE~\eqref{eq_vsie} is valid for pressure fields in the larger $L^2$ function space. We exploit this opportunity and design a discretization scheme amenable to piecewise constant function spaces. 

We will search for a discrete solution $p_{L^2}(\mathbf{x})$ in $\bar\Omega$ such that its restriction to each subdomain is in $L^2$. Precisely, $p_{L^2}|_{\Omega_i} \in L^2(\Omega_i)$ and $p_{L^2}|_{\Gamma_i} \in L^2(\Gamma_i)$ for $i = 1, 2$. This approach can also be interpreted as searching for a set
\begin{align*}
    (p_{\Omega_1}, p_{\Omega_2}, p_{\Gamma_1}, p_{\Gamma_2}) \in L^2(\Omega_1) \times L^2(\Omega_2) \times L^2(\Gamma_1) \times L^2(\Gamma_2)
\end{align*}
of four independent functions, each extended by zero outside their respective domains. The discrete solution can then be reconstructed as
\begin{equation}
    p_{L^2}(\mathbf{x}) = p_{\Omega_1}(\mathbf{x}) + p_{\Omega_2}(\mathbf{x}) + p_{\Gamma_1}(\mathbf{x}) + p_{\Gamma_2}(\mathbf{x}), \quad \mathbf{x} \in \bar\Omega.
    \label{eq_discrete_subdomain_decomposition}
\end{equation}
Hence, we do not enforce the discrete solution to be continuous in the interior domain, nor do we explicitly enforce the transmission condition~\eqref{eq_transmission_pressure} at the material interfaces. In other words, we no longer impose the physical continuity across the interfaces via the functional setting, but rather leverage it via the integral operators in the VSIE. This decoupling allows us to project the continuous integral equations onto discrete function spaces of low-order polynomials.

The discrete solution $p_{L^2}$ will be interpreted as the acoustic field in the interior~$\bar\Omega$. The pressure field in the exterior~$\Omega_0$ can then be calculated using the VSIE~\eqref{eq_vsie} as a representation formula.

\subsubsection{Linear system}

The weak formulation~\eqref{eq_weak_form_global} involves combinations between all four geometric components: the two volumetric regions~$\Omega_1$ and~$\Omega_2$, and the surfaces~$\Gamma_1$ and~$\Gamma_2$. The coupled system reads
\begin{align}
    &\left(\begin{bmatrix}
        \ID_{\Omega_1} & 0 & 0 & 0 \\
        0 & \ID_{\Omega_2} & 0 & 0 \\
        0 & 0 & \ID_{\Gamma_1} & 0 \\
        0 & 0 & 0 & \ID_{\Gamma_2}
    \end{bmatrix}\right. \nonumber \\
    &+
    \left.\begin{bmatrix}
        -\SL_{\Omega_1,\Omega_1} + \AD_{\Omega_1,\Omega_1} & - \SL_{\Omega_1,\Omega_2} + \AD_{\Omega_1,\Omega_2} & \DL_{\Omega_1,\Gamma_1} & \DL_{\Omega_1, \Gamma_2} \\
        -\SL_{\Omega_2,\Omega_1} + \AD_{\Omega_2,\Omega_1} & -\SL_{\Omega_2,\Omega_2} + \AD_{\Omega_2,\Omega_2} & \DL_{\Omega_2,\Gamma_1} & \DL_{\Omega_2, \Gamma_2} \\
        -\SL_{\Gamma_1,\Omega_1} + \AD_{\Gamma_1,\Omega_1} & -\SL_{\Gamma_1,\Omega_2} + \AD_{\Gamma_1,\Omega_2} & \DL_{\Gamma_1,\Gamma_1} & \DL_{\Gamma_1,\Gamma_2} \\
        -\SL_{\Gamma_2,\Omega_1} + \AD_{\Gamma_2,\Omega_1} & -\SL_{\Gamma_2,\Omega_2} + \AD_{\Gamma_2,\Omega_2} & \DL_{\Gamma_2,\Gamma_1} & \DL_{\Gamma_2,\Gamma_2}
    \end{bmatrix}
    \right)
    \begin{bmatrix}
        p_{\Omega_1} \\
        p_{\Omega_2} \\
        p_{\Gamma_1} \\
        p_{\Gamma_2}
    \end{bmatrix}
    =
    \begin{bmatrix}
        {f}_{\Omega_1} \\
        {f}_{\Omega_2} \\
        {f}_{\Gamma_1}\\
        {f}_{\Gamma_2}
    \end{bmatrix}
\end{align}
where
\begin{align}
    &\ID_{\Sigma}[q(\mathbf{x})] = \int_{\Sigma} m(\mathbf{x}) q(\mathbf{x}) \psi(\mathbf{x}) \dx, \\
    &\SL_{\Sigma,\Xi}[q(\mathbf{y})] = \int_{\Sigma} \int_{\Xi} G_{k_0}(\mathbf{x},\mathbf{y}) \left(\beta(\mathbf{y}) - k_0^2 \alpha(\mathbf{y})\right) q(\mathbf{y}) \dy \, \psi(\mathbf{x}) \dx, \\
    &\DL_{\Sigma,\Gamma_1}[q(\mathbf{y})] = \int_{\Sigma} \int_{\Gamma_1} \left(\frac{\partial}{\partial \normal_1(\mathbf{y})} G_{k_0}(\mathbf{x},\mathbf{y})\right) \alpha_1(\mathbf{y}) q(\mathbf{y}) \dy \, \psi(\mathbf{x}) \dx, \\
    &\DL_{\Sigma,\Gamma_2}[q(\mathbf{y})] = \int_{\Sigma} \int_{\Gamma_2} \left(\frac{\partial}{\partial \normal_2(\mathbf{y})} G_{k_0}(\mathbf{x},\mathbf{y})\right) \left(\alpha_2(\mathbf{y}) - \alpha_1(\mathbf{y})\right) q(\mathbf{y}) \dy \, \psi(\mathbf{x}) \dx, \\
    &\AD_{\Sigma,\Xi}[q(\mathbf{y})] = \int_{\Sigma} \nabla_{\mathbf{x}} \cdot \int_{\Xi} G_{k_0}(\mathbf{x},\mathbf{y}) \left( \nabla_{\mathbf{y}} \alpha(\mathbf{y})\right) q(\mathbf{y}) \dy \, \psi(\mathbf{x}) \dx,
\end{align}
the weak form of the scaled mass, single-layer, double-layer, and adjoint double-layer integral operators, respectively, and
\begin{equation}
    f_{\Sigma} = \int_{\Sigma} p_\mathrm{inc}(\mathbf{x}) \psi(\mathbf{x}) \dx
\end{equation}
is the weak formulation of the incident wave field. Here, $\Sigma \in \{\Omega_1, \Omega_2, \Gamma_1, \Gamma_2\}$ any of the integration domains.

\subsubsection{Discrete representation}

We discretize the solution~\eqref{eq_discrete_subdomain_decomposition} as
\begin{equation}
    p_{L^2}(\mathbf{y}) = \sum_{i=1}^{N_{\Omega_1}} c_i \phi_i(\mathbf{y}) + \sum_{i=1}^{N_{\Omega_2}} \hat{c}_i \hat{\phi}_i(\mathbf{y}) + \sum_{i=1}^{N_{\Gamma_1}}\mathring c_i\mathring \phi_i(\mathbf{y}) + \sum_{i=1}^{N_{\Gamma_2}} \tilde{c}_i \tilde{\phi}_i(\mathbf{y})
\end{equation}
for basis functions $\phi_i$, $\hat{\phi}_i$, $\mathring{\phi}_i$, and $\tilde{\phi}_i$ that have local support in $\Omega_1$, $\Omega_2$, $\Gamma_1$, and $\Gamma_2$, respectively. Since no derivatives of the unknown pressure are present in the weak formulation, we choose piecewise-constant (P0) basis functions. Precisely, $\phi_i(\mathbf{y})$ and $\hat{\phi}_i(\mathbf{y})$ equal one inside a single cuboid mesh element in $\Omega_1$ and $\Omega_2$, respectively, and zero everywhere else. The basis functions $\mathring\phi_i(\mathbf{y})$ and $\tilde\phi_i(\mathbf{y})$ are defined on the interface $\Gamma_1$ and $\Gamma_2$, respectively, and equal one on a specific surface element related to a face of the cuboid mesh, and zero anywhere else. We combine these P0 basis functions with a collocation approach. That is, we select the test functions from Dirac delta distributions at the centroid of each cuboid mesh element in $\Omega_1$ and $\Omega_2$ and at the centroid of each rectangular face element on~$\Gamma_1$ and~$\Gamma_2$.

All integrals are evaluated with a midpoint quadrature rule. The weakly-singular integrals for self-interactions are evaluated with analytical expressions on a sphere of equal volume to the grid element (see, e.g.,~\cite{groth2021accelerating}).

\begin{remark}
    While we present the discretization as a point collocation method, we note that it is algebraically equivalent to a $P_0$-$P_0$ Galerkin discretization, where the outer testing integrals are approximated using a midpoint quadrature rule. Furthermore, our discretization approach is reminiscent of the classical direct dipole approximation of the VIE~\cite{draine1994discrete}.
\end{remark}

\subsection{Boundary Element Method}
\label{sec_bem}

In the case of piecewise constant material parameters, the Helmholtz system~\eqref{eq_helmholtz_subdomains} can also be solved with the BEM on the material interfaces. We use the standard PMCHWT formulation for nested domains as a benchmark algorithm. Details on the BEM formulations and discretizations can be found in previous studies that use the same notation and implementation platform, such as~\cite{wout2021benchmarking}.

\subsection{Finite Element Method}
\label{sec_fembem}

For locally heterogeneous materials, we couple the FEM with a BEM at the outer surface. Precisely, we use the standard Johnson-Nédélec coupling technique~\cite{johnson1980coupling}, adjusted to nested domains. Notice that when $\Omega_1$ is homogeneous, one can use a FEM-BEM coupled system using a BEM at both $\Gamma_1$ and $\Gamma_2$, and a FEM in $\Omega_2$. We follow the same approach as our previous study~\cite{wout2022fembem} and refer to that manuscript for details on the FEM-BEM formulation and its implementation.

\section{Results}
\label{sec:results}

To verify our proposed VSIE formulation, we created a benchmark suite that allows comparison with spherical harmonics, BEM, and FEM-BEM algorithms. See Table~\ref{table_benchmarks} for a summary. We choose a relatively simple piecewise constant speed of sound to prioritize assessing the influence of the density function. It is the density that yields the surface integrals in the VSIE, thereby distinguishing it from the commonly used Lippmann-Schwinger equation.

\begin{table}[htbp]
    \caption{The benchmark suite for the acoustic VSIE formulation for nested domains.}
    \label{table_benchmarks}
    \centering
    \begin{tabular}{rlp{40mm}l}
        \toprule
        ID & speed of sound & density & comparison \\
        \midrule
        1 & piecewise constant & constant & analytic solution \\
        2 & piecewise constant & piecewise constant & BEM \\
        3 & piecewise constant & locally heterogeneous and globally continuous & FEM-BEM \\
        4 & piecewise constant & locally heterogeneous with discontinuity & FEM-BEM \\
        \bottomrule
    \end{tabular}
\end{table}

\subsection{Computational settings}

The VSIE~\eqref{eq_vsie} was numerically solved with collocation and piecewise-constant basis functions on cuboid meshes, as explained in Section~\ref{sec_galerkin}. The discrete matrix equation is solved using the iterative Generalized Minimal Residual (GMRES) algorithm with a relative tolerance of $10^{-5}$ and no restarts. No preconditioning was used for the VSIE, while we used mass-matrix preconditioning for the BEM and the sparse LU decomposition of the FEM block as the preconditioner in the FEM-BEM coupled system.

When reporting the number of grid elements per wavelength, we use the largest element across the entire grid. For the element size, we calculate the largest side length in a voxel mesh and the element diameter in a triangular or tetrahedral mesh. Furthermore, the shortest wavelength across the subdomains is used for reporting elements per wavelength.

We use a plane-wave field
\begin{equation}
    p_\mathrm{inc}(\mathbf{x}) = e^{\imath k_0 \mathbf{x} \cdot \hat{\mathbf{d}}}, \quad \mathbf{x} \in \Rthree,
    \label{eq_planewave}
\end{equation}
as the incident wave field~\eqref{eq_pinc}, which is an idealized model for a point source far away from the scatterer. Here, $\hat{\mathbf{d}}$ is a unit-sized vector representing the propagation direction of the plane-wave field. If not stated otherwise, we use $\hat{\mathbf{d}} = \hat{\mathbf{x}}$, the positive $x$-direction. The geometry's shape, material's density, and speed of sound differ across benchmarks and are chosen to represent structures and materials common in biomedical engineering~\cite{haqshenas2021fast}. No attenuation will be used, although the VSIE methodology readily allows for complex-valued wavenumbers.

The VSIE algorithm was implemented in Python using the SciPy library~\cite{scipy} for linear algebra. The matrix assembly was shared-memory parallelized with multi-threading and just-in-time compilation via the Numba library~\cite{numba}. The benchmarks with BEM and FEM-BEM coupling were performed with the open-source libraries Bempp~\cite{smigaj2015solving, betcke2021bempp} and Fenics~\cite{fenicsx}, using piecewise-linear Galerkin discretization on triangular and tetrahedral meshes generated with Gmsh~\cite{geuzaine2009gmsh}.

While the small-scale examples were run on personal computers, we used a dedicated computer node for the convergence studies. The hardware has 2~Intel(R) Xeon(R) CLX 4216 processors, 32 cores, and 2048~GB of RAM.

\subsection{Verification with spherical harmonics}

One of the rare cases in which the Helmholtz equation admits an analytical solution is a spherical domain with constant material parameters. In this case, the pressure field can be expressed as a series expansion in spherical harmonics~\cite{anderson1950sound}. Having an analytical solution allows us to calculate the VSIE's error exactly.

Let us consider a sphere of radius 2.5 as the inner domain $\Omega_2$. The intermediate domain $\Omega_1$ covers the cube~$[-3,3]^3$. The density is taken to be globally constant, normalized as $\rho(\mathbf{x}) = 1$ for $\mathbf{x} \in \Rthree$. The speed of sound is piecewise constant with a moderate contrast: $c_0 = c_1 = 1$ and $c_2 = 5/6$. The frequency is chosen such that $\lambda_0 = 2$ for the exterior wavelength. A uniform cuboid mesh is generated inside the cube~$[-3,3]^3$, which means that the sphere's surface is not exactly represented, causing staircase effects at $\Gamma_2$. Finally, notice that in this special case, the VSIE~\eqref{eq_vsie} reduces to the Lippmann-Schwinger equation~\eqref{eq_lippmann_schwinger}.

\begin{figure}[htbp]
    \centering
    \includegraphics[width=\textwidth]{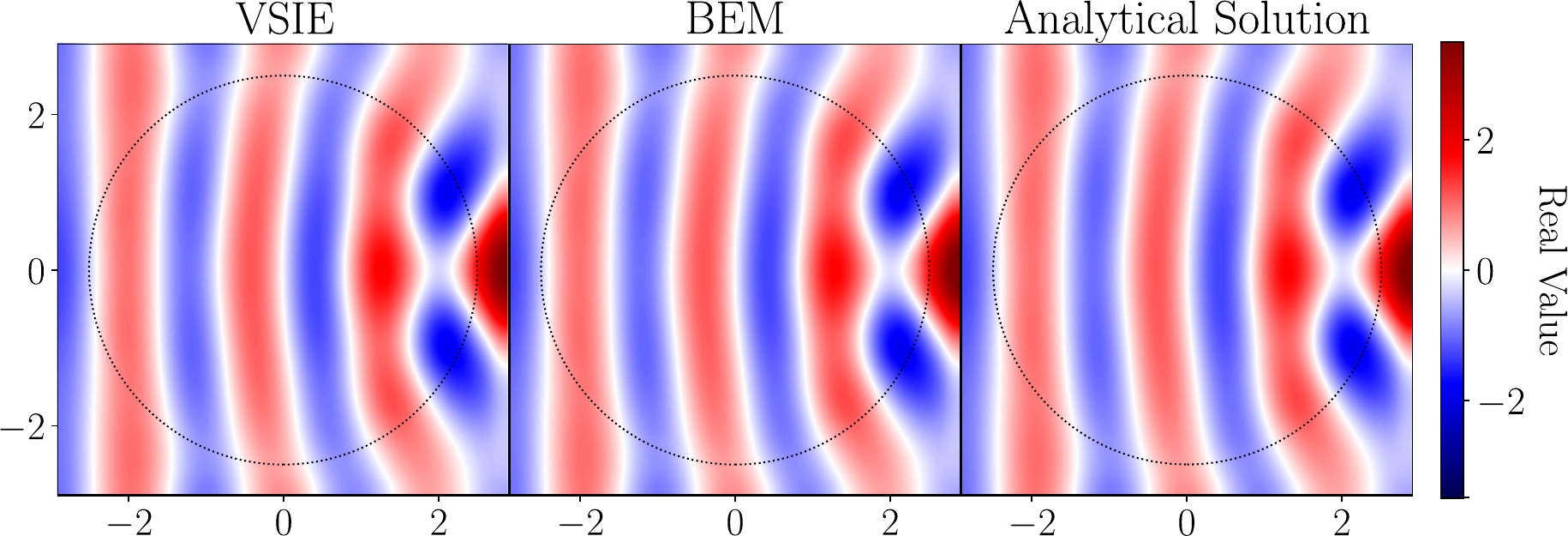}
    \caption{The real part of the acoustic field propagating through a sphere, at a slice through the center ($z=0$). The three panels represent the VSIE, BEM, and analytical solution. The meshes use 8 elements per interior wavelength.}
    \label{fig_sphere_field}
\end{figure}

Figure~\ref{fig_sphere_field} shows the acoustic field propagated through the sphere, on a slice at $z=0$, directly taken from the solution in the cuboid's centers. The numerically computed solutions from the VSIE and BEM algorithms visually match the analytical solution perfectly. More precisely, we can calculate the relative error
\begin{equation}
    E_\mathrm{rel} = \frac{\norm{p_\mathrm{numerical} - p_\mathrm{benchmark}}}{\norm{p_\mathrm{benchmark}}}
    \label{eq_error}
\end{equation}
with the pressure calculated on the visualization slice. Here, we use the analytical solution as the benchmark, and both the Frobenius $\norm{\cdot}_2$ and maximum $\norm{\cdot}_\infty$ norms. For the VSIE, we retrieve the acoustic field directly from the solution vector. For the BEM, we calculate the field at the voxel centers using the representation formula. The relative errors of the VIE and BEM compared with the analytical solution are lower than 3\% in the simulation for Figure~\ref{fig_sphere_field}, which uses 8 elements per wavelength.

\begin{figure}
    \centering
    \includegraphics[width=0.8\linewidth]{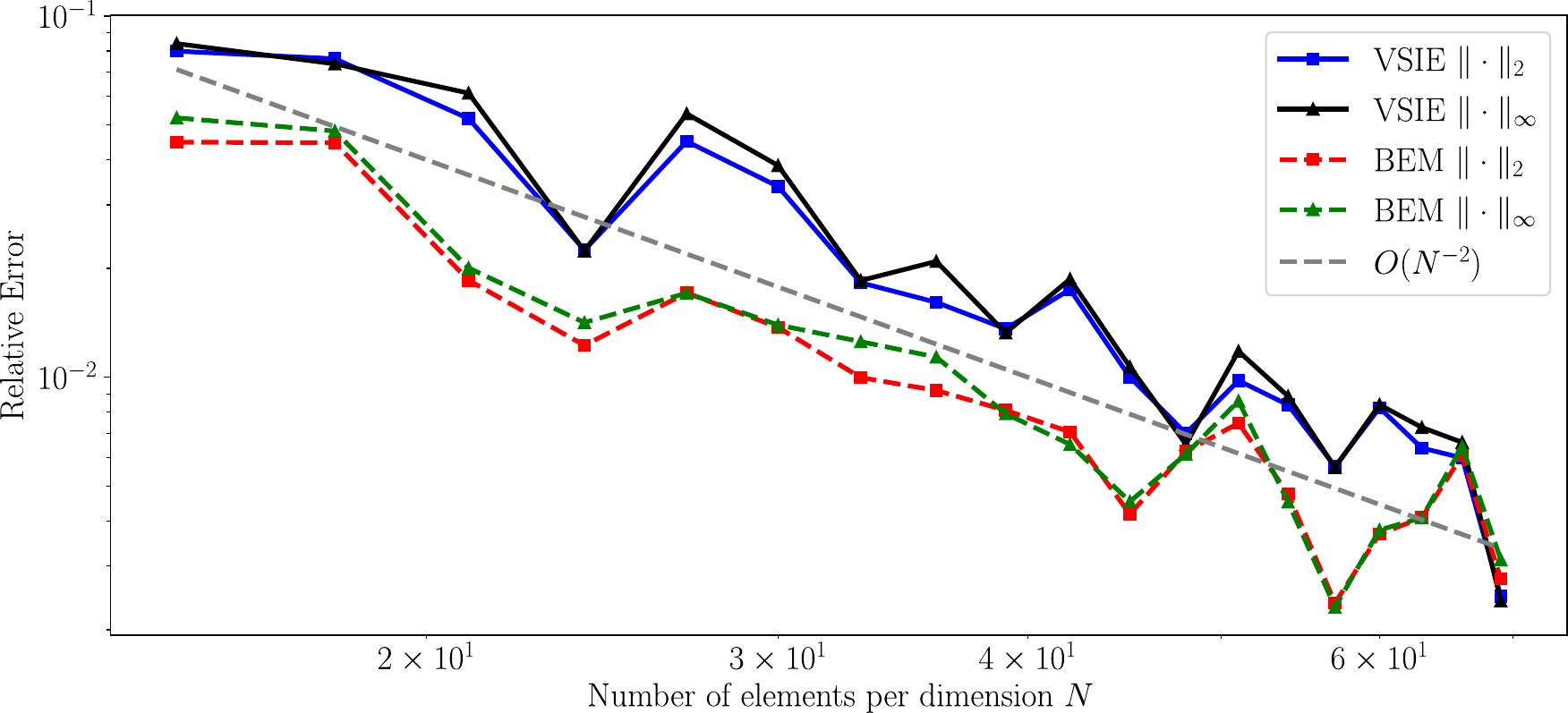}
    \caption{The mesh convergence study on a sphere shows the numerical error relative to the analytical solution versus the number of elements per dimension $N$. For this benchmark, we have $N = N_x = N_y = N_z$. The dashed line depicts second-order convergence $(\mathcal O(N^{-2}))$.}
    \label{fig_sphere_loglog}
\end{figure}

To test the consistency of our numerical approach, let us study mesh convergence. For this purpose, we calculated the relative error of the pressure field at different meshes and a fixed frequency. Figure~\ref{fig_sphere_loglog} confirms that the numerical errors reduce consistently when finer meshes are used. In general, the VIE tends to have slightly higher errors than the BEM, given the same number of grid elements per wavelength. We attribute this to the staircase effect of the cuboid mesh and the discretization strategy. That is, the implemented BEM algorithm uses triangular meshes, P1~elements, and higher-order quadrature, whereas our custom implementation of the VIE uses low-order approximations on cuboids. The errors of the BEM and VSIE decrease at a rate of $\mathcal O(N^{-2})$ with respect to $N$, the number of elements per dimension. This second-order convergence verifies the accuracy of the VSIE method. 

\subsection{Piecewise-constant material density}

Let us consider a geometry with two concentric cubes embedded in an unbounded region, as depicted in Fig.~\ref{fig_geometry}. We select piecewise-constant material parameters that include a density jump. Hence, the VSIE~\eqref{eq_vsie} involves the single-layer volume integral over the interior domains and the double-layer boundary integral over the material interface. However, the adjoint-double-layer volume integral vanishes due to the piecewise-constant density. As the density function~\eqref{eq_density}, we use $\rho_0 = \rho_1 = 1000$~kg/m$^3$ and $\rho_2 = 500$~kg/m$^3$. The speed of sound function~\eqref{eq_speed_of_sound} is taken as $c_0 = 1482.3$~m/s in the exterior to resemble water, and we use $c_1 = c_2 = c_0/1.2$, a slightly lower speed of sound in the interior domains. The frequency is such that the wavelength equals 3~mm in the exterior domain. The subdomain $\Omega_1$ is a cube with sides of 7~mm and $\Omega_2$ a cube with 5~mm sides, both centered in the global origin.

In the absence of an analytical solution, we compare the VSIE results with those from the BEM. Precisely, the acoustic field is calculated on the slice $z=0$ with 10 elements per wavelength on the rectangle $[-3.5,3.5]^2$~mm. Figure~\ref{fig_piecewise_constant_field} shows the field calculated by the VSIE and BEM and confirms the accuracy of the numerical methods. The overall wave patterns are the same, with only slight differences at the material interface, where the VSIE displays numerical artifacts. These numerical inaccuracies are due to the low-order numerical approximations performed in our current implementation of the VSIE. In any case, these localized inaccuracies do not significantly affect other regions. It's also clear that both models accurately represent the acoustic diffraction patterns.

\begin{figure}[htbp]
    \centering
    \includegraphics[width=0.7\textwidth]{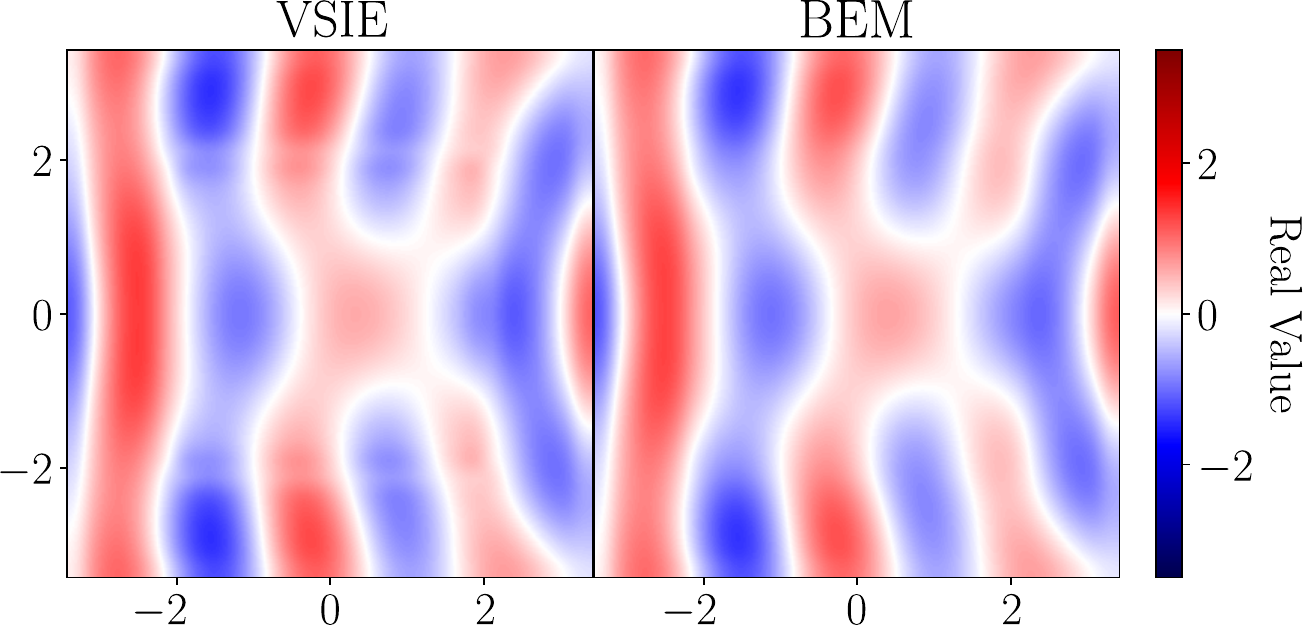}
    \caption{The real part of the acoustic field calculated by the VSIE (left panel) and the BEM (right panel), at $z=0$. This benchmark has a wavespeed jump at the outer interface and a density jump at the inner interface of the nested cubes. The meshes use 10 elements per interior wavelength.}
    \label{fig_piecewise_constant_field}
\end{figure}

Let us also study mesh convergence for this benchmark. The BEM uses a triangular surface mesh at the boundaries $\Gamma_1$ and $\Gamma_2$, which can be generated with arbitrary mesh resolution. In contrast, the VSIE uses a voxel mesh across the volumetric domain. When using uniform voxels, the geometric interface~$\Gamma_2$ may not coincide with faces between voxels. This misalignment can cause significant numerical errors. To avoid this geometric error, we use a nonuniform cuboid mesh that exactly matches the geometric interfaces at voxel faces; see Figure~\ref{fig_grid}. Hence, the voxel mesh has slightly different resolutions in the two concentric cubes. This still allows for consistent mesh refinement, and we report the number of elements per wavelength for the largest voxel.

\begin{figure}[htbp]
    \centering
    \includegraphics[width=0.8\textwidth]{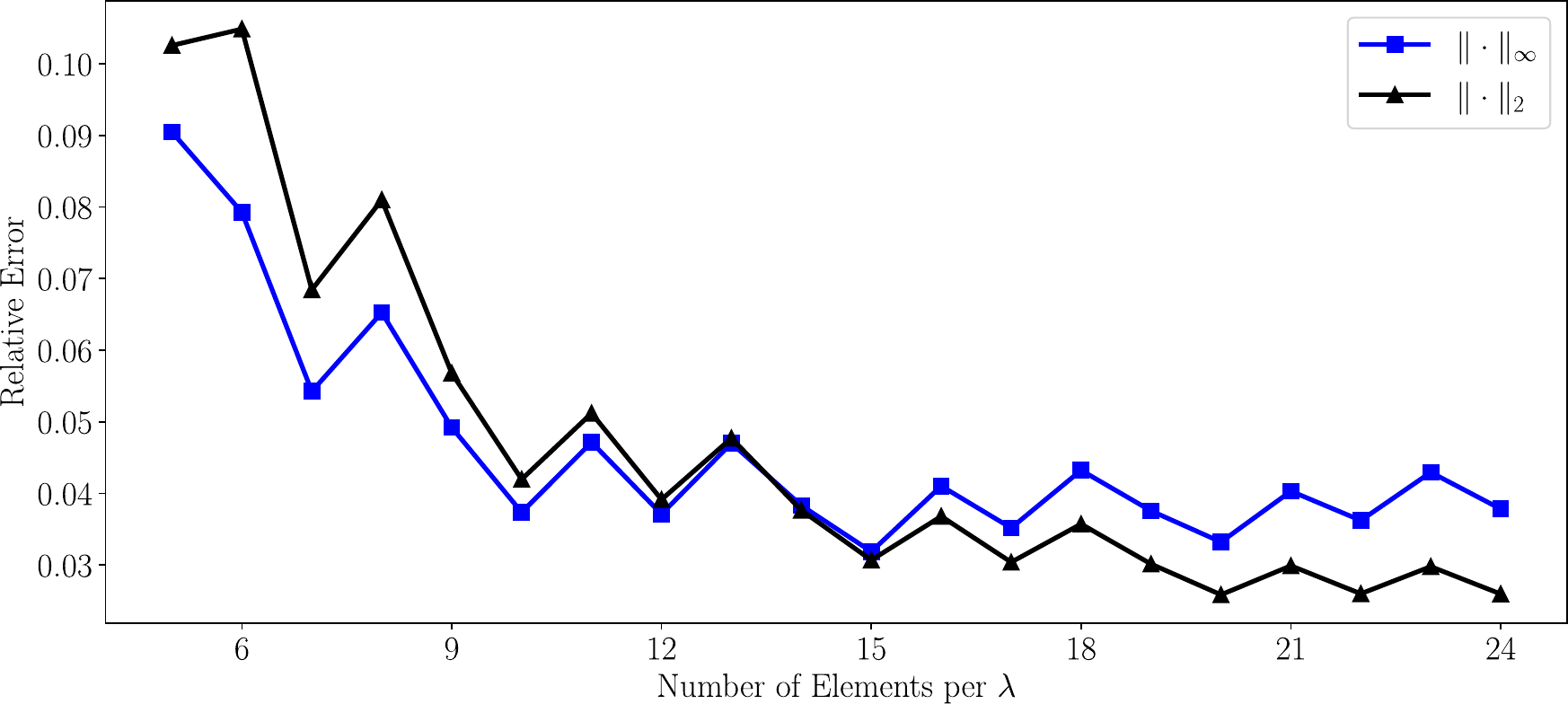}
    \caption{The convergence of the relative difference between VSIE and BEM in the $L^2$ and $L^\infty$ norms with respect to the number of elements per interior wavelength~$\lambda$. The benchmark has piecewise-constant material parameters.}
    \label{fig_piecewise_constant_convergence}
\end{figure}

In the absence of an analytical solution, we calculate the relative error of the VSIE using the BEM solution as the benchmark in Eq.~\eqref{eq_error}. Figure~\ref{fig_piecewise_constant_convergence} confirms that the fields calculated by the two numerical methods quickly match with only a few grid elements per wavelength. For example, the relative difference is below 8\% with 8 or more elements per wavelength. This is slightly higher than in the previous benchmark on the sphere, where the density was globally constant. Also, notice that the error is again concentrated near the material interface with the density discontinuity. In the end, the relative difference between the VSIE and BEM drops below 5\%, which is a reasonable threshold for comparing two numerical algorithms.

\subsection{Heterogeneous density}

For the next benchmark, we consider a single heterogeneous domain~$\Omega_1 = [-3.5, 3.5]^3$~mm, for which the VSIE simplifies to Eq.~\eqref{eq_vsie_single_domain}. We use $c_0 = 1482.3$~m/s as the exterior speed of sound and $c_1 = c_0/1.2$ in the interior. The density will be taken as heterogeneous in the interior. Specifically,
\begin{align*}
  \rho(x,y,z) =
  \begin{cases}
      1000 & \text{if } (x,y,z) \in \Omega_0, \\
      1000 - 800 \sin\left(\frac{\pi}{3.5} x\right) \sin\left(\frac{\pi}{3.5} y\right) \cos\left(\frac{\pi}{7} z\right) & \text{if } (x,y,z) \in \Omega_1.
      \end{cases}
\end{align*}
Notice that the density is a smooth function, with values between 200 and 1800~kg/m$^3$, and has no jumps across~$\Gamma_1$. We select a frequency of $f = c_0/4$~kHz, which corresponds to a wavelength of 4~mm in the exterior.

Since the density is heterogeneous, the BEM cannot be used to solve this scenario. Instead, we compare the VSIE with a FEM-BEM algorithm, where the BEM is defined on $\Gamma_1$ and the FEM on $\Omega_1$. Standard Johnson-Nédelec coupling was used, as explained in Section~\ref{sec_fembem}.

\begin{figure}[ht]
    \centering
    \includegraphics[width=0.7\textwidth]{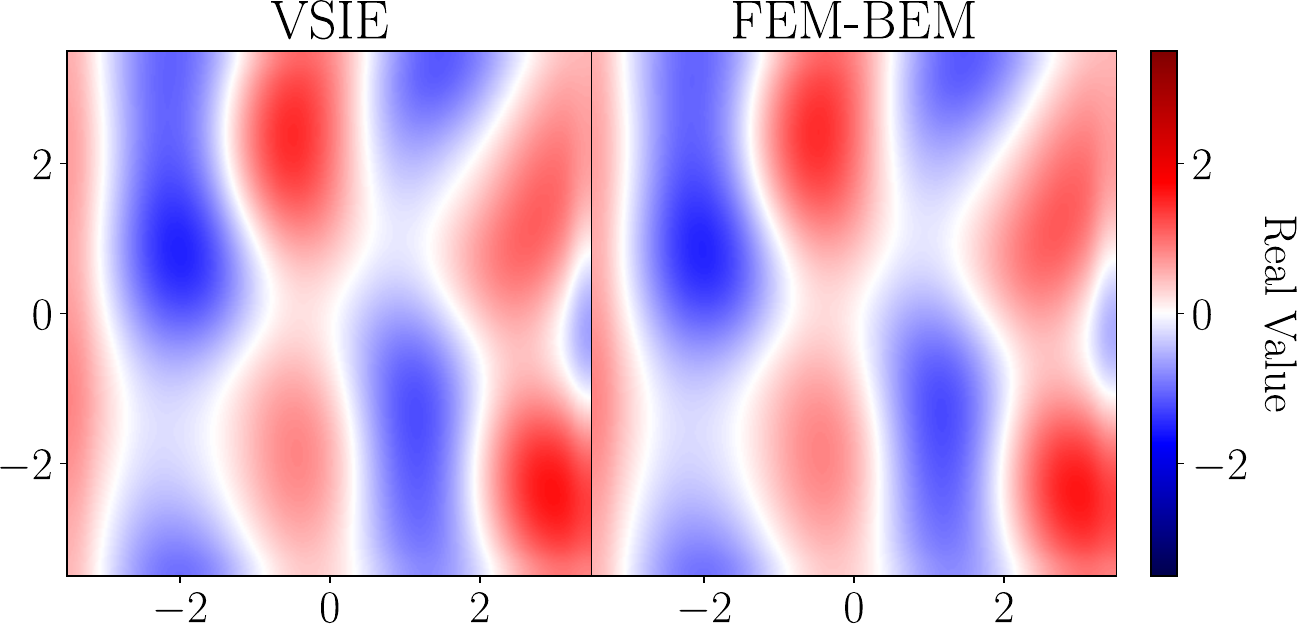}
    \caption{The real part of the solution for the benchmark with heterogeneous materials at $z=0$, calculated with the VSIE (left panel) and FEM-BEM (right panel). The meshes use 10 elements per interior wavelength for VSIE and 20 for FEM-BEM.}
    \label{fig:benchmark3}
\end{figure}

The results in Figure~\ref{fig:benchmark3} clearly show that the acoustic fields are consistent. The heterogeneous density causes nonsymmetric diffraction and propagation patterns. Qualitatively, no differences between the VSIE and FEM-BEM are visible.

As before, we also quantified the relative error in a mesh convergence study. However, a direct comparison between the VSIE and FEM-BEM with the same number of elements per wavelength proved misleading. The FEM, and also coupled FEM-BEM, typically need a finer mesh to achieve the same accuracy as pure integral equation approaches like BEM and VSIE. Based on our computational experience with these benchmarks, we decided to compare VSIE simulations at resolution~$h$ with FEM-BEM simulations at $h/2$. Hence, FEM-BEM uses a two-times-finer mesh than VSIE.

\begin{figure}[htbp]
    \centering
    \includegraphics[width=0.8\textwidth]{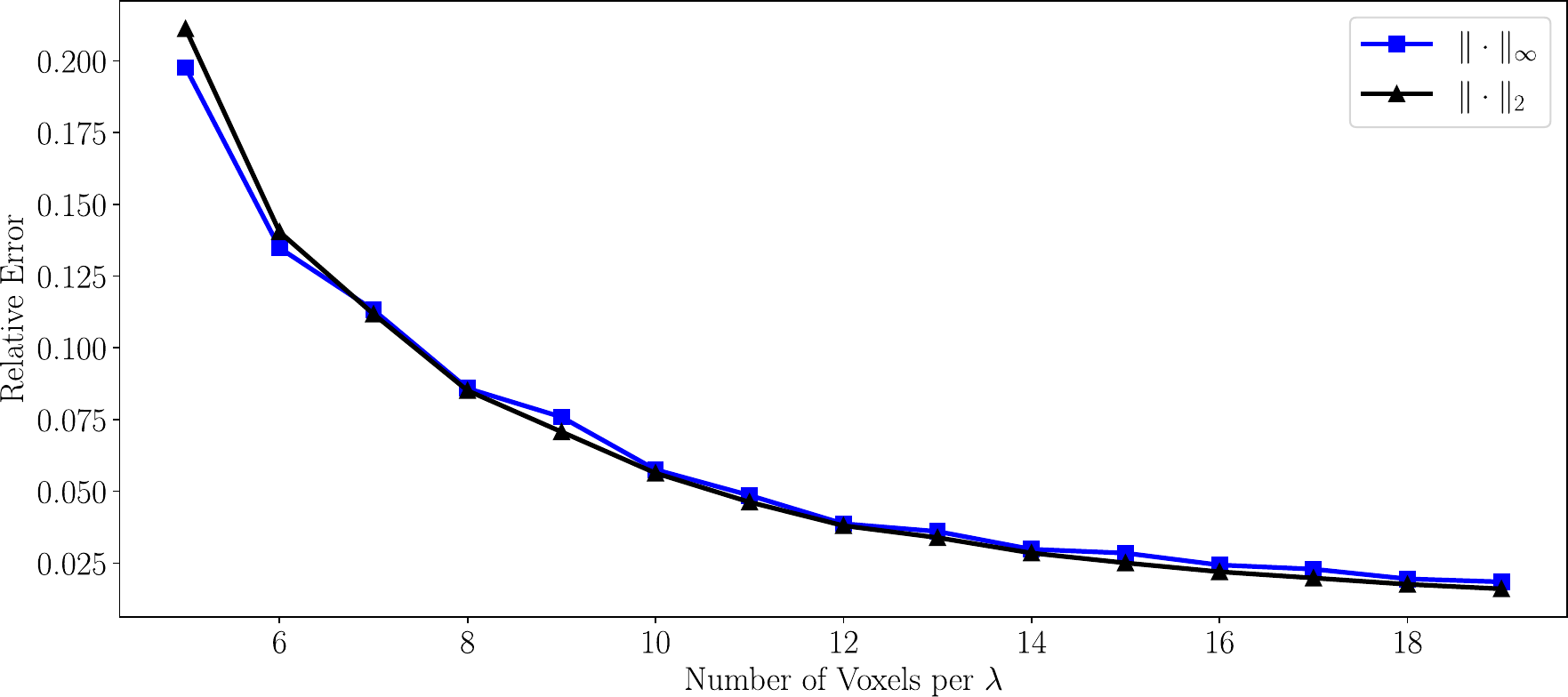}
    \caption{The convergence of the relative difference in the $L^2$ and $L^\infty$ norms with respect to mesh refinement for Benchmark 3. The horizontal axis denotes the mesh refinement level $N$. For the VSIE method, $N$ represents the number of voxels per interior wavelength. For the FEM-BEM method, we use $2N$ elements per wavelength.}
    \label{fig:benchmark3error}
\end{figure}

Figure~\ref{fig:benchmark3error} shows the mesh convergence of the relative error between VSIE and FEM-BEM on the heterogeneous benchmark. The relative difference between these different numerical methods consistently reduces with finer meshes. The error drops below 5\% at refinement level $N=11$. That is, 11 elements per wavelength in the VSIE and 22 for FEM-BEM. At the finest meshes, the algorithms are sufficiently accurate for most practical purposes.

\subsection{Heterogeneous density with discontinuity}

Our fourth benchmark will showcase the full capability of VSIE to model acoustic wave propagation through nested domains with heterogeneous materials and jumps across interfaces. We will consider concentric cubes of side lengths 7~mm and 5~mm for $\Omega_1$ and $\Omega_2$, respectively. We use $c_0 = 1482.3$~m/s as the exterior speed of sound and $c_1 = c_2 = c_0/1.2$ in the interior. As the density function, we use
\begin{align*}
  \rho(x,y,z) =
  \begin{cases}
      1000 & \text{if } (x,y,z) \in \Omega_0,  \\
      1000 & \text{if } (x,y,z) \in \Omega_1,  \\
      2000 - 800 \sin\left(\frac{\pi}{2.5} x\right) \sin\left(\frac{\pi}{2.5} y\right) \cos\left(\frac{\pi}{5} z\right) & \text{if } (x,y,z) \in \Omega_2.
      \end{cases}
\end{align*}
Notice that this means that the density is heterogeneous in the inner cube and has a jump between 1000 and 2000~kg/m$^3$ across $\Gamma_2$, the interface between the two nested cubes. The wavespeed has a jump across the outer boundary $\Gamma_1$. The frequency is $f = c_0 / 2.5$~kHz, resulting in an exterior wavelength of 2.5~mm.

The VSIE's solution will be compared with a coupled FEM-BEM system, where the FEM covers the heterogeneous inner cube~$\Omega_2$, and two BEM equations are formulated on $\Gamma_2$ and $\Gamma_1$. The resulting FEM-BEM-BEM coupled system will be solved at half the VSIE mesh resolution. As before, this finer mesh for FEM-BEM provides a fairer comparison with the VSIE.

The results in Figure~\ref{fig:benchmark4} again confirm the consistency of the VSIE. The acoustic fields are qualitatively the same for the benchmark involving both heterogeneity and high material contrasts. The most visible deficiency of the VSIE is near the high-contrast interface. As in previous benchmarks, the near-interface inaccuracies do not significantly affect the global solution. Furthermore, postprocessing of the VSIE's solution can improve the acoustic field. That is, highly accurate field evaluation can be performed at arbitrary locations by applying the representation formula~\eqref{eq_vsie} to the discrete solution.

\begin{figure}[htbp]
    \centering
    \includegraphics[width=0.7\textwidth]{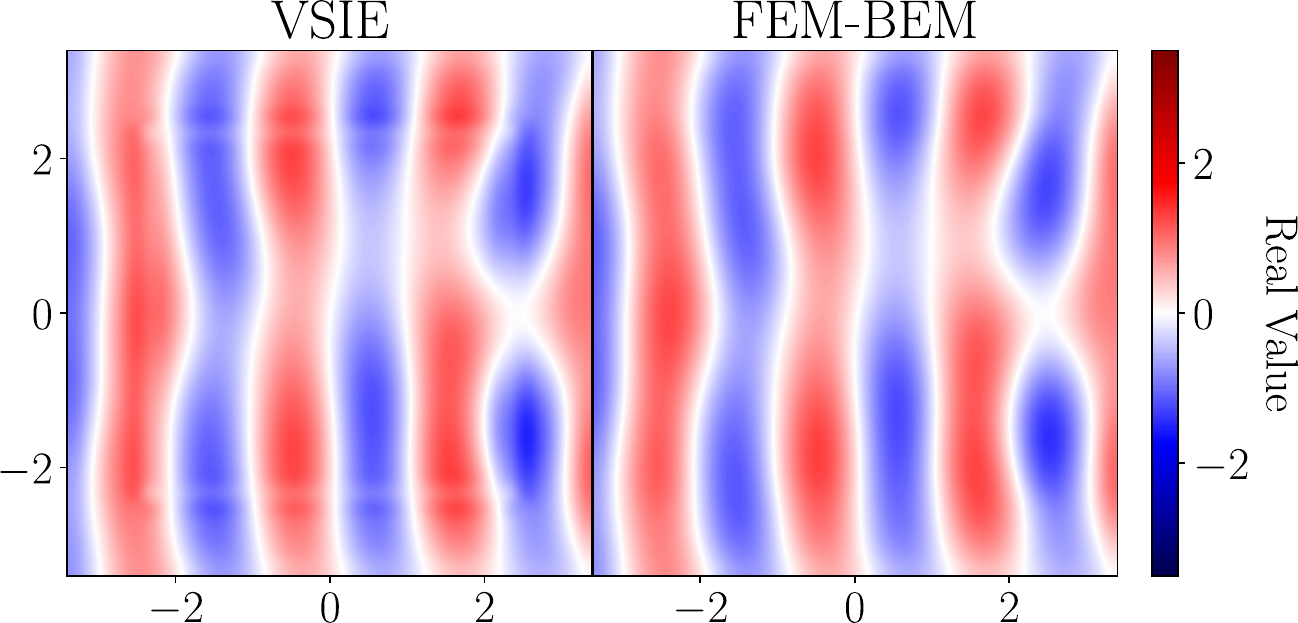}
    \caption{The real part of the solution for Benchmark~4 at $z=0$, calculated with the VSIE (left panel) and FEM-BEM-BEM (right panel). The meshes use 10 elements per interior wavelength for VSIE and outer BEM, and 20 for FEM-BEM.}
    \label{fig:benchmark4}
\end{figure}

For the mesh convergence study, we follow the same approach as before. The frequency used in the study is $f=c_0/3.5$ kHz. A nonuniform cuboid mesh is used for the VSIE to guarantee matching geometric and grid interfaces, and the FEM-BEM solution is calculated at twice the number of elements per wavelength. Figure~\ref{fig:benchmark4error} confirms mesh convergence. However, for this more challenging benchmark, we observe a slower convergence than previously. The differences start leveling off after 12 elements per wavelength in the VSIE. Still, the error already reached the 5\% threshold in the Frobenius norm. The maximum-norm error remains slightly higher, likely due to localized errors near the high-contrast interface.

\begin{figure}[htbp]
    \centering
    \includegraphics[width=0.8\textwidth]{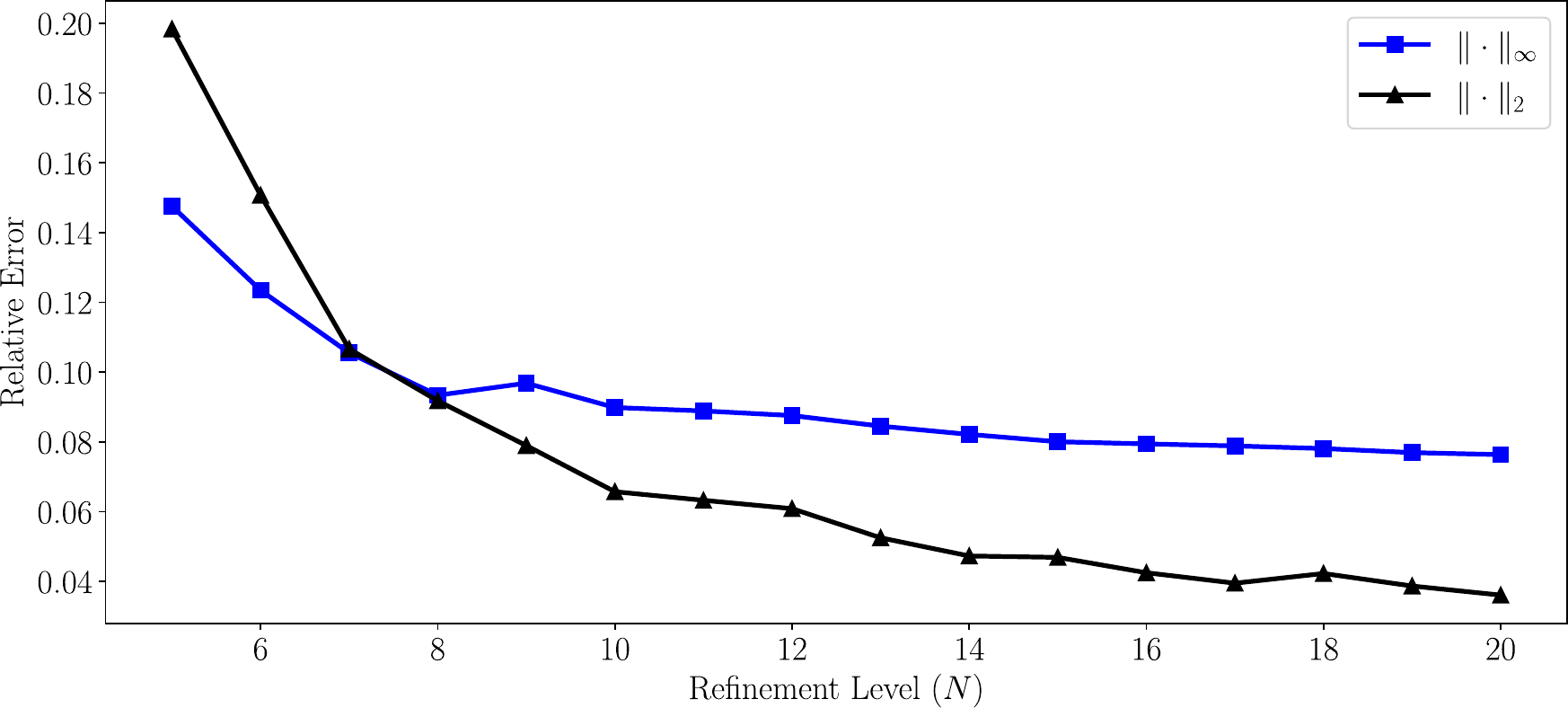}
    \caption{The convergence of the relative difference with norm $L^2$ and $L^\infty$ with respect to the number of points per wavelength for Benchmark 4. The horizontal axis denotes the mesh refinement level $N$. For the VSIE method, $N$ represents the number of voxels per interior wavelength. For the FEM-BEM method, the BEM mesh on $\Gamma_1$ uses $N$ elements per wavelength, while the BEM mesh on $\Gamma_2$ and the FEM mesh use $2N$ elements per wavelength.}
    \label{fig:benchmark4error}    
\end{figure}

To highlight the complexity of this benchmark, we present a 3D visualization of the solution in Figure~\ref{fig_benchmark4_3D}. It uses the same wavespeed and density functions, with the incident wave field~\eqref{eq_planewave} propagating in direction $\hat{\mathbf{d}} = \sqrt{1/2}\,(1,1,0)$ and the frequency was set to $f = c_0 / 1.4$~kHz to achieve an exterior wavelength of 1.4~mm. The VSIE was solved on a 10-voxel-per-wavelength grid, containing 226\,981~voxels in total.

\begin{figure}[htbp]
    \centering
    \includegraphics[height=60mm, trim=550 220 530 300, clip]{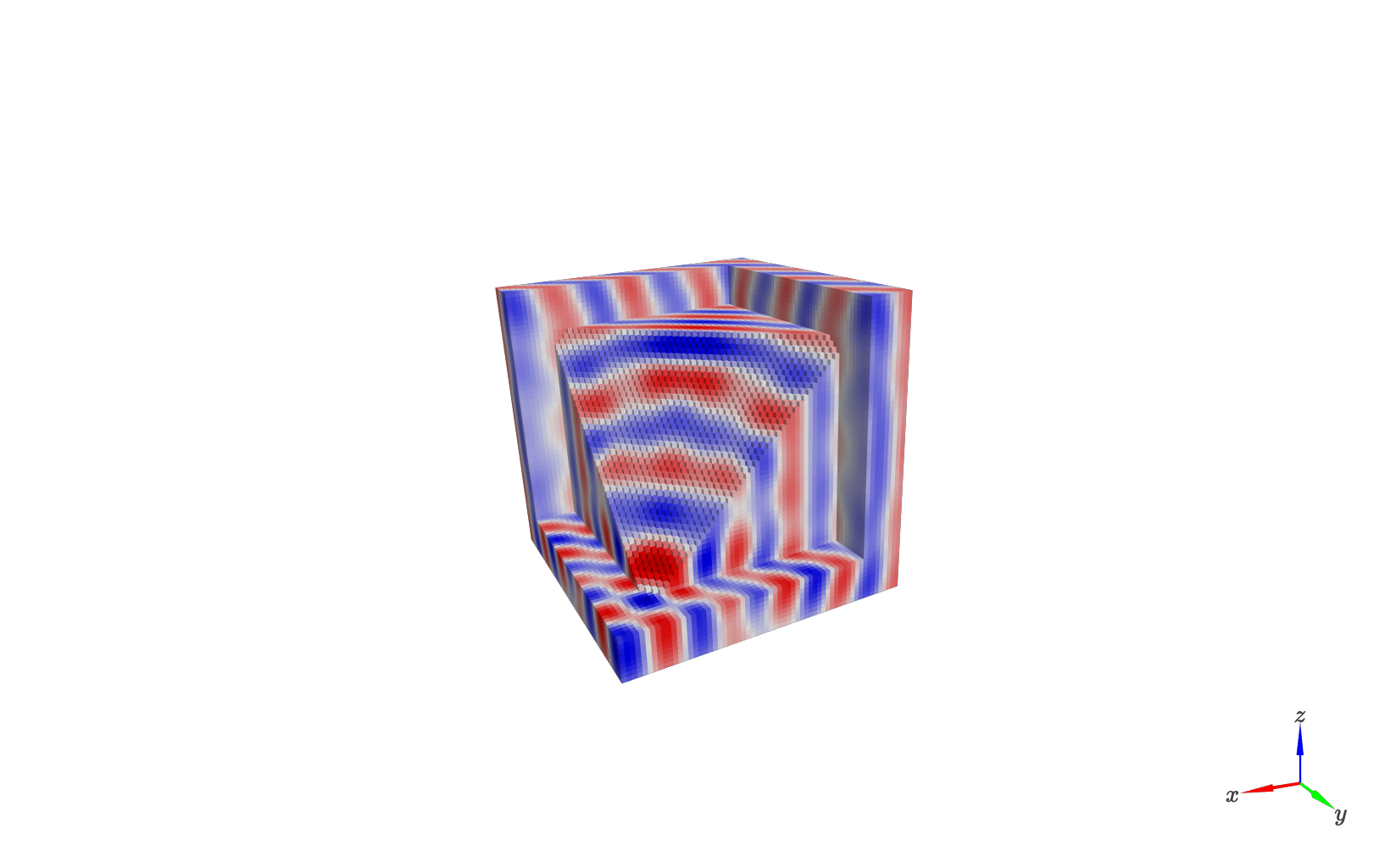}
    \caption{The real part of the solution for the pressure field in Benchmark~4. The 3D visualization uses a single color value per grid voxel and diagonal clipping of the geometry.}
    \label{fig_benchmark4_3D}     
\end{figure}

Figure~\ref{fig_benchmark4_3D} provides an exploded view of the pressure field inside the bounded domains. Three sides of the intermediate domain~$\Omega_1$ are presented, with the other three removed for visibility purposes. The diagonal clipping of the nested cube~$\Omega_2$ allows visualization of the pressure field inside the heterogeneous material, as well as the fine voxel mesh used for the benchmark. The field values are constant for each voxel, as in the discretization.

\section{Discussion}

We restricted this manuscript to a geometry with two nested domains embedded in an unbounded region. While this topology includes a wider class of geometries than single domains, it's still limited. However, the same methodology can easily be extended to more nested domains by following the same design principles. Furthermore, multiple domains are implicitly covered, since the bounded domains need not be connected. Future research will target VSIE formulations and software code that flexibly handle multiple and nested domains.

Verifying novel methodologies against distinct numerical methods is often an underappreciated yet essential activity for establishing credibility in computational engineering. This study presented benchmarking exercises comparing the VSIE with analytical solutions, BEM simulations for piecewise-constant materials, and FEM-BEM simulations for locally heterogeneous materials in unbounded domains. The relative differences consistently diminished with mesh refinement, thus confirming the accuracy of the numerical techniques. However, convergence stalled for the more complex benchmarks. We expect the differences to drop further with more accurate VSIE implementations that include Galerkin discretization with piecewise linear elements, tetrahedral meshes, and higher-order numerical quadrature. Also, when increasing the frequency further, our ambiguous factor of two in mesh resolution between VSIE and FEM-BEM needs to be reconsidered, as both methods behave differently with respect to pollution effects in the high-frequency regime.

Finally, we purposefully did not report computation time or memory usage for the benchmarks. A fair comparison of computational performance between different numerical methods requires a thorough benchmark design that accounts for algorithmic details such as element type, quadrature schemes, numerical linear algebra, the number of elements per wavelength, and acceleration methods like fast multipole and matrix compression. Furthermore, comparisons must include implementation details such as programming language, parallelization scheme, and hardware characteristics.

\section{Conclusions}

This study successfully demonstrated two innovations regarding the VSIE algorithm. First, VSIE formulations for nested domains with heterogeneous materials and jumps across interfaces were derived in detail. Second, the accuracy of the VSIE was benchmarked on a suite of test cases against the BEM and FEM-BEM coupled methods. The computational benchmarks verified numerical accuracy, as mesh-convergence studies show consistent acoustic fields across challenging scenarios.

\section*{Acknowledgments}

\subsection*{Software and data availability}
The Python code to reproduce the numerical results is available on GitHub (\href{https://github.com/daniloaballayf/BenchmarkingVSIEAcoustics}{github.com/daniloaballayf/BenchmarkingVSIEAcoustics}).

\subsection*{Declaration on generative AI}
During the preparation of this work, the authors used Google Gemini to improve the manuscript's language and readability. The authors reviewed and edited the content as needed and take full responsibility for the article's content.

\subsection*{Funding sources}
This work was financially supported by the Agencia Nacional de Investigación y Desarrollo (ANID), Chile [FONDECYT 1230642].

\bibliographystyle{unsrtnat}
\bibliography{refs.bib}

\end{document}